\definecolor{shadecolor}{rgb}{1,0.8,0.3}
\numberwithin{equation}{section}
\theoremstyle{plain}
\newtheorem{thm}{Theorem}[section] 
\newtheorem{theorem}[thm]{Theorem}
\theoremstyle{definition}
\theoremstyle{remark}
\newcommand{\cl}[1]{\mathcal{#1}}
\newcommand{\bb}[1]{\mathbb{#1}}
\begin{document}

\title{Relative Weak Injectivity for Operator Systems}
\author{Ali S$.$ Kavruk}
\thanks{2010 Mathematics Subject Classification. Primary 46L06, 46L07; Secondary 46L05, 47L25, 47L90.}
\thanks{Key words. operator system, tensor product, nuclear C*-algebras, injectivity}
\begin{abstract}

We investigate the notion of relative weak injectivity and its  nuclearity related properties in the category of operator systems.  We obtain several characterizations of the weak expectation property. We show that (c,max)-nuclearity characterizes Kirchberg and Wasserman's C*-systems. Namioka and Phelps' test systems, which detects nuclear C*-algebras, is shown to characterize nuclear C*-systems.
We study quasi-nuclearity in the operator system setting and prove that quasi-nuclearity and nuclearity are equivalent, in other words, (er,max)-nuclearity and (min,max)-nuclearity are equivalent.

\end{abstract}

\maketitle

Weak expectation property (WEP) is a fundamental nuclearity related property introduced by C$.$ Lance \cite{Lance2}, \cite{Lance}. For von Neumann algebras WEP coincides with injectivity, and can be interpreted as a weak form of classical completely positive factorization property. If a C*-algebra $\cl A$ has WEP then $\cl A^{**}$ is injective relative to $\cl A$, therefore such objects are also known as weakly injective C*-algebras \cite{Kirchberg-Presentation}, \cite{Paulsen WEP}. WEP is studied in various categories in operator algebras \cite{pisier_intr}, \cite{Haagerup},  \cite{Blecher},  \cite{KPTT Nuclearity},  \cite{JIAN LIANG}. A more general framework of weak injectivity introduced by E$.$ Kirchberg \cite{Kirchberg94}:  for C*-algebras $\cl A \subseteq \cl B$, $\cl A$ is said to be \textit{relatively weakly injective} in $\cl B$, or weakly relatively injective, $w.r.i.$ in short, if the canonical inclusion of $\cl A$ into $\cl A^{**}$ admits a conditional expectation on $\cl B$. This is a nuclearity related property in the sense that $\cl A$ is w.r.i$.$ in $\cl B$ if and only if $\cl A \otimes_{\max} \cl C \subseteq \cl B \otimes_{\max} \cl C$ for every C*-algebra $\cl C$, in other words, the projective tensor product behaves injectively. A recent work \cite{Kavruk NP} exposes that relative weak injectivity coincides with tight Riesz interpolation property and Riesz-Arveson extension property for C*-algebras.

\smallskip

Our main purpose in this paper is to study the notion of relative weak injectivity in the operator system category. As in weak expectation property, weak relative injectivity has two natural extensions to operator systems, namely w.r.i. and r.d.c.i., \textit{relative double commutant injectivity}. We define w.r.i$.$ for a pair of operator system $\cl S_1 \subseteq \cl S_2$ analogously: $\cl S_1$ is said to be \textit{w.r.i}$.$ in $\cl S_2$ if the canonical inclusion $\cl S_1$ into $\cl S_1^{**}$ extends to a ucp map on $\cl S_2$. A weaker condition is defined as follows: $\cl S_1$ has \textit{r.d.c.i}$.$ in $\cl S_2$ if for all representation $\cl S_1 \subseteq B(\cl H)$, the inclusion $i$ of $\cl S_1$ into $B(\cl H)$ extends to a ucp map $\tilde i$ on $\cl S_2$ such a way that $\tilde i(\cl S_2) \subseteq i(\cl S_1)''$, the bi-commutant of the image of $i$. It is worth mentioning that in the definition of r.d.c.i., embedding of $\cl S_1$ into {\it every} $B(\cl H)$ is essential, in fact, one can always find an inclusion $\cl S_1 \subseteq B(\cl H)$ such that the requirement in the definition holds. W.r.i$.$ and r.d.c.i$.$ are different in operator system category, however, coincide for a pair of C*-algebras.

\smallskip

\textbf{Note:} The notion of relative weak injectivity is also studied previously in \cite{AB}. We remark that the w.r.i$.$ definition proposed in \cite{AB} coincides with r.d.c.i$.$ definition in this article.

\smallskip

We first observe that w.r.i$.$ is a nuclearity related property: for $\cl S_1 \subseteq \cl S_2$, $\cl S_1$ is w.r.i$.$ in $\cl S_2$ if and only if $\cl S_1 \otimes_{\max} \cl T \subseteq \cl S_2 \otimes_{\max} \cl T$ for every operator system $\cl T$. One of our characterization for w.r.i$.$ involves in extension property into matrix systems: $\cl S_1 $ is w.r.i.\! in $\cl S_2$ if and only if for every matrix system $\cl R$ (i.e$.$ an operator subsystem of a matrix algebra $M_n$ for some $n$) every ucp map $\varphi: \cl S_1 \rightarrow \cl R$, has a ucp extension $\tilde \varphi: \cl S_2 \rightarrow \cl R$. This allows us to obtain a tensorial characterization via matrix quotients: $\cl S_1$ is w.r.i$.$ in $\cl S_2$ if and only if $\cl S_1 \otimes_{\max} (M_n / J) \subseteq \cl S_2 \otimes_{\max} (M_n / J)$ for every $n$ and null-subspace $J\subset M_n$. As a corollary we obtain that $\cl S$ has the weak expectation property if and only if the minimal and the maximal tensor product coincide on $\cl S \otimes (M_n / J)$ for every $n$ and null-subspace $J \subset M_n$. As the quotient has the lifting property, we recover the equivalence of (el,max)-nuclearity and weak expectation property.


\smallskip

Arveson's extension theorem is one of the most fundamental tool in operator algebras \cite{Arveson}. Combining with a result of Choi and Effros, it states that for operator systems $\cl S_1 \subseteq \cl S_2$ and Hilbert space $\cl H$, every cp map $\varphi: \cl S_1 \rightarrow B(\cl H)$ extends to cp map $\tilde \varphi: \cl S_2 \rightarrow B(\cl H)$. There is also a partial order on the self-adjoint completely bounded maps given by $\psi \leq \phi$ if $\phi-\psi$ is a cp map. Relative weak injectivity is the key property for Arveson's extension theory which also obeys the order of cp maps: $\cl S_1 \subseteq \cl S_2$ is w.r.i$.$ if and only if every state $\varphi$ on $\cl S_1$ has a state extension $\tilde \varphi$ on $\cl S_2$ such that every positive linear functional $\psi$ on $\cl S_1$ with $\psi \leq \varphi$ has a positive extension $\tilde \psi$ on $\cl S_2$ with $\tilde \psi \leq \tilde \varphi$. Moreover, this can be achieved such a way  the  $E: [\varphi] \rightarrow [\tilde \varphi]$ given by $\psi \mapsto \tilde \psi$ is a ucp map for which the restriction map is the inverse ucp map. (Here $[\varphi]$ denotes the Effros system associated with $\varphi$ as explained in Section 1.)

\smallskip

An operator system $\cl S$ for which the bidual operator system $S^{**}$ has structure of a C*-algebra is called a {\it C*-system} \cite{KW}. Nuclear operator systems are such examples, in fact, if $\cl S$ is nuclear then the bidual operator system $\cl S^{**}$ is injective, thus has a structure of an injective von Neumann algebra \cite{Kirchberg94}. We prove that being a C*-system is a nuclearity related property and is equivalent to (c,max)-nuclearity in the sense of \cite{KPTT Nuclearity}. In fact, $\cl S$ is a C*-system if and only if $\cl S$ is w.r.i$.$ in its universal C*-algebra $C_u^*(\cl S)$.

\smallskip

In tensor theory of compact convex sets the square is a test object to verify semi-simplexity \cite{NP}. Therefore, from the nuclearity viewpoint, Namioka and Phelps' test systems identify the nuclear objects in function systems. Recall that these test systems are defined by
$$
\cl W_{2n} = \{ (a_i) \in \ell^\infty_{2n} :\; a_1 + a_2 +  \cdots + a_n = a_{n+1} + a_{n+2} + \cdots a_{2n} \} \subseteq  \ell^\infty_{2n}.
$$
In \cite{kav wri C} we streamline their results in non-commutative setting by showing that $\cl W_6$ detects  nuclear C*-algebras. We extend this result to C*-systems: a C*-system $\cl S$ is nuclear if and only if we have a canonical complete order isomorphism
$
\cl S\otimes_{\min} \cl W_6 = \cl S\otimes_{\max} \cl W_6.
$
Unfortunately such a property for general operator systems remains open. However we prove that an operator system $\cl S$ is nuclear if and only if $
\cl S \otimes_{\min} \cl R = \cl S \otimes_{\max} \cl R
$ for every matrix system $\cl R$.
This, in particular, implies that (er,max)-nuclearity is equivalent to nuclearity, extending C. Lance's notion of quasi-nuclearity to general operator systems. 

\smallskip

In Section 1 we review basics aspects of operator systems that are required for our work herein. In Section 2 we obtain several equivalent formulations of relative weak injectivity and deduce that C*-systems and (c,max)-nuclear objects coincide. Section 3 includes non-commutative analogue of Namioka and Phelps' theory extended on C*-system and quasi-nuclearity for operator systems.

\section{Preliminaries}

By an \textit{operator system} $\cl S$ we mean a unital $*$-closed subspace of $ B(\cl H)$ together with the induced matricial order structure, where $B(\cl H)$ denotes the von Neumann algebra of bounded linear operators on a Hilbert space $\cl H$.  We refer the reader to \cite{PaulsenBook} for an excellent source of operator systems and their abstract characterizations given by Choi and Effros \cite{ChoiEffros77}. A map between operator systems
$ \varphi: \cl S_1 \rightarrow \cl S_2$ is called \textit{completely positive}, \textit{cp} in short, if the $n^{\rm th}$-amplification
$id_n \otimes \varphi: M_n \otimes \cl S_1 \rightarrow M_n \otimes \cl S_2$ is positive for all $n$.
If $\varphi$ is also unital, i.e. $\varphi(e) = e$, we will say that $\varphi$ is a \textit{ucp} map. We assume familiarity with the universal and enveloping C*-algebra of an operator system $\cl S$, denoted by $C_u^*(\cl S)$ and $C_e^*(\cl S)$, resp. For a cp map  $\varphi: \cl S\rightarrow B(\cl H)$
$$
[\varphi] = span\{\psi: \cl S \rightarrow B(\cl H): \; \psi \mbox{ is cp with } \psi\leq \varphi\} \subseteq CB(\cl S, B(\cl H))
$$
is an operator system with unit $\varphi$, which we call the \textit{Effros system} associated with $\varphi$.

\subsection{Duality} The topological dual $\cl S^*$ of an operator system $\cl S$ can be endowed with a matricial
order structure via matricial cp maps. More precisely, after defining the self-adjoint idempotent $*$ via $f^*(s) = \overline{f(s^*)}$, we declare
$$
(f_{ij}) \in M_n(\cl S^*) \mbox{ positive if } \cl S \ni s \longmapsto (f_{ij}(s)) \in M_n \mbox{ is cp}. 
$$
The collection of the cones of the positive elements $\{M_n(\cl S^*)^+\}_{n=1}^\infty$ forms a strict, compatible matricial order structure on $\cl S^*$.
In general an Archimedean matrix order may fail to exist for this matricially ordered space. If dim$(\cl S) < \infty$ then a faithful state $w$ on $\cl S$ can be assigned as an Archimedean order unit for $\cl S^*$ \cite{ChoiEffros77}. In general, for any state $f$ on $\cl S$
$$
[f] = span \{g: g\leq f\} \subseteq S^*
$$
declares an operator system with unit $f$, which we call the Effros system as above.

\subsection{Quotients} A subspace $J \subset \cl S$ is called a kernel if $J$
is kernel of a ucp map defined from $\cl S$ (equivalently kernel of a cp map).
A kernel is typically a non-unital $*$-closed subspace but these properties, in general, do not characterize a kernel.
A matricial order structure on the algebraic quotient $\cl S/J$ can be defined
by
$$
Q_n = \{ (s_{ij} + J): (s_{ij}) \in M_n(\cl S)^+) \}.
$$
The Archimedeanization process, i.e, completion of the
cones $\{Q_n\}$ relative to order topology induced by $(e +J) \otimes I_n$ (see \cite{Paulsen-Tomforde}, \cite{KPTT Tensor}),
yields the operator system quotient $\cl S/J$.
The universal property of the quotient ensures that if $\varphi: \cl S \rightarrow \cl T$
is a ucp map then the induced map $\dot{\varphi}: \cl S/{\rm ker}(\varphi) \rightarrow \cl T$ is again a ucp map \cite{farenick--paulsen2011}.
$\varphi$ is called a \textit{quotient}  (resp.,\textit{ complete quotient}) map if $\dot{\varphi}$ is an order (resp. a complete order)  inclusion.
In particular a surjective ucp map $\varphi$ is completely quotient if and only if the adjoint $\varphi^\dag: \cl T^* \rightarrow \cl S^*$ is a complete order inclusion.

A finite dimensional $*$-closed subspace $J$ of an operator system $\cl S$ which does not include any positive elements other that 0 is called a \textit{null-subspace}. Any null-subspace is a kernel \cite{kavruk2011}. If $\cl R \subseteq M_n$ is an operator system then $\cl R^* \cong M_n/J$ for some null-subspace $J\subset M_n$. To see this one can simply take the adjoint $i^\dag: M_n^* \rightarrow \cl R^*$ of the inclusion of $R$ into $M_n$, which is a quotient map. Now using Farenick and Paulsen's identity $M_n^* \cong M_n$ \cite{farenick--paulsen2011} and observing that the kernel of $i^\dag$ is a null-subspace we get $\cl R^* \cong M_n/J$. Conversely $(M_n/J)^*$ can be embedded into $M_n$ as an operator subsystem. To see this one can take the adjoint of the quotient map from $M_n$ into $M_n/J$. We leave the details to the reader.

\subsection{Minimal tensor product} For operator systems $\cl S$ and $\cl T$ we define
$$
C_n^{\min} = \{ [x_{ij}] \in M_n(\cl S \otimes \cl T): \; [(\phi \otimes \psi)(x_{ij})] \geq 0 \hspace{3cm}
$$
$$
\hspace{3cm}\mbox{ for all }p,q \in \bb N, \mbox{ for all ucp } \phi: \cl S \rightarrow M_p \mbox{ and } \psi: \cl T \rightarrow M_q\}.
$$
The collection of cones $\{C_n^{\min}\}_{n=1}^\infty$ forms a strict compatible matricial ordering for the algebraic tensor $\cl S \otimes \cl T$.  
Moreover, $1_{\cl S} \otimes 1_{\cl T}$ is a  Archimedean order unit. Therefore the triplet
$
(\cl S \otimes \cl T, \{C_n^{\min}\}_{n=1}^\infty, 1_{\cl S} \otimes 1_{\cl T})
$
forms an operator system which we call the minimal tensor product of $\cl S$ and $\cl T$ and denote by $\cl S\otimes_{\min} \cl T$. 
We refer the reader to \cite{KPTT Tensor} for details. The minimal tensor product is spatial, injective and functorial. By the representation
of the minimal tensor we mean the identification 
CP$(\cl S, \cl T) \cong (\cl S^*\otimes_{\min} \cl T)^+
$
whenever dim$(\cl S)<\infty$ \cite{kavruk2011}.

\subsection{Maximal tensor product} Let $\cl S$ and $\cl T$ be two operator systems. We define
$$
D_n^{\max} = \{ X^* (S \otimes T) X:  \; S \in M_p(\cl S)^+, \; T \in M_q(\cl T)^+,\; X \mbox{ is }pq\times n \mbox{ matrix}, \; p,q \in \bb N \}.
$$
The collection of the cones $\{D_n^{\max}\}_{n=1}^\infty$ are strict and compatible. Moreover, $1\otimes 1$ is a matricial order unit
for the matrix ordered space $(\cl S \otimes \cl T, \{D_n^{\max}\})$. Nonetheless $1\otimes 1$ may fail to be Archimedean, which can be resolved by
Archimedeanization process. We define
$$
C_n^{\max} = \{  X \in M_n(\cl S \otimes \cl T): X + \epsilon (1\otimes 1)_n \in D_n^{\max} \mbox{ for all } \epsilon >0    \}.
$$
The collection $\{C_n^{\max} \}$ forms a strict, compatible matrix ordering on $\cl S\otimes \cl T$
for which $1\otimes 1$ is an Archimedean matrix order unit. We let $\cl S\otimes_{\max} \cl T$
denote the resulting tensor product. max is functorial and projective \cite{KPTT Tensor}, \cite{Han}.
By the representation of the maximal tensor product we mean the canonical identification
CP$  (\cl S \otimes_{\max} \cl T, \bb C ) \cong$CP$(\cl S, \cl T^*)$.

\subsection{Commuting tensor product} We construct the commuting tensor product $\cl S \otimes_{\rm c} \cl T$ of two operator systems $\cl S$ and $\cl T$ via ucp maps with commuting ranges, that is, the collection of matricial positive cones are defined by
$$
C^{\rm com}_n = \{ X \in M_n(\cl S \otimes \cl T): \mbox{ for all Hilbert spaces } \cl H \mbox{ and for all ucp maps }
$$
$$
\hspace{3cm} \varphi: \cl S \rightarrow B(\cl H), \; \psi: \cl T \rightarrow B(\cl H)  \mbox{ with commuting ranges we have }
$$
$$
(\varphi \otimes \psi)^n (X) \geq 0 \}.
$$
We refer \cite{KPTT Tensor} for basic properties of this tensor product and recall that if one of the tensorant has a structure of a C*-algebra then c and max coincide, that is, for an operator system $\cl S$ and a C*-algebra $\cl A$ we have a canonical complete order isomorphism $\cl S \otimes_{\rm c} \cl A = \cl S \otimes_{\max} \cl A$.

\subsection{Some asymmetric tensor products} For operator systems $\cl S$ and $\cl T$ we define the left injective and right injective tensor products by the inclusions
$$
\cl S \otimes_{\rm el} \cl T : \subseteq I(\cl S) \otimes_{\max} \cl T \;\;\;\; \mbox{and} \;\;\;\; \cl S \otimes_{\rm er} \cl T : \subseteq \cl S \otimes_{\max} I(\cl T). 
$$
The tensor product el is a left injective in the sense that for any operator systems $\cl S_1 \subseteq \cl S_2$ and $\cl T$, we have a canonical complete order embedding
$
\cl S_1 \otimes_{\rm el} \cl T \subseteq \cl S_2 \otimes_{\rm el} \cl T.
$
Besides, el is maximal left injective tensor product. Analogous properties for for the right injective tensor products hold. We refer \cite{KPTT Tensor} for details.

\subsection{Nuclearity} For operator system tensor products $\alpha$ and $\beta$ we write $\alpha \leq \beta$ if for every operator systems $\cl S$ and $\cl T$ the canonical map
$
\cl S \otimes_{\beta} \cl T \rightarrow \cl S \otimes_{\alpha} \cl T
$
is ucp. For example, the tensor products we discuss above exhibit min $\leq $ el, er $\leq$ c $\leq$ max. An operator system $\cl S$ is said to be $(\alpha,\beta)$-\textit{nuclear} if $\cl S \otimes_{\alpha} \cl T = \cl S \otimes_{\beta} \cl T $ for every operator system $\cl T$. Local liftability, weak expectaion, completely positive factorization, exactness are examples of intrinsic characterizations of nuclearity related properties. We refer the reader \cite{KPTT Nuclearity} for details and remark that, in this article, we work on (c,max)-nuclearity and (er,max)-nuclearity. An operator system $\cl S$ is said to be \textit{nuclear} if it is (min,max)-nuclear, i.e., $\cl S \otimes_{\min} \cl T = \cl S \otimes_{\max} \cl T $ for all $\cl T$.
$$
\xymatrix{
\min  \ar@{-}@/^1pc/[rr]^{\hspace{1cm}exactness}   \ar@{-}@/^4pc/[rrrrrrrr]|-{CPFP} 
\ar@{-}@/_1pc/[rrrr]_{LLP}    \ar@{-}@/_3pc/[rrrrrr]|-{C^*-nuclearity}
&  \leq  &
{\rm el}  \ar@{-}@/_3pc/[rrrrrr]|-{WEP}     \ar@{-}@/^1pc/[rrrr]^{DCEP} 
&  ,   & {\rm er} \ar@{-}@/^2pc/[rrrr]^{\rotatebox{45}{=}\;\;\;\;\;}|-{quasi-nuc.}
&  \leq     &   {\rm c} \ar@{-}@/_1pc/[rr]_{\small C^*-syst.\hspace{1.3cm}} & \leq & \max
}
$$

\section{Relative Weak Injectivity}

As we defined in the introduction, an operator subsystem $\cl S_1$ of an operator system $\cl S_2$ is said to be \textit{w.r.i.} in $\cl S_2$ if the canonical inclusion of $\cl S_1$ into its bidual operator system $\cl S_1^{**}$ extends to ucp map on $\cl S_2$.
$$
\xymatrix{
 \cl S_1 \ar@{_{(}->}[d]^{j} \ar@{^{(}->}[rr]^{i} & &  \cl S_1^{**} \\
 \cl S_2  \ar[rru]_{ucp \;\tilde i} & &
}
$$

An operator system $\cl S$ is said to have the \textit{weak expectation property} (WEP) if the canonical inclusion of $\cl S \hookrightarrow \cl S^{**}$ extends to a ucp map on $I(\cl S)$. Note that this is equivalent to $\cl S$ having w.r.i$.$ in $I(\cl S)$. We start with the following:

\begin{theorem} \label{main thm}The following are equivalent for $\cl S_1 \subseteq \cl S_2:$
\begin{enumerate}
\item $\cl S_1$ is w.r.i. in $\cl S_2;$

\smallskip

\item for all matrix systems $\cl R$, every ucp map $\varphi: \cl S_1 \rightarrow \cl R$ has a ucp extension $\tilde \varphi: \cl S_2 \rightarrow \cl R;$

\smallskip

\item for every operator systems $\cl T$, every ucp map $\varphi: \cl S_1 \rightarrow \cl T^{**}$ extends to a ucp map $\tilde \varphi: \cl S_2 \rightarrow \cl T^{**};$

\smallskip

\item for every $n$ and null-subspace $J \subset M_n$ we have unital order embedding
$$
\cl S_1 \otimes_{\max} (M_n/J) \subseteq \cl S_2 \otimes_{\max} (M_n/J);
$$

\item for every operator system $\cl T$ we have a complete order embedding
$$
\cl S_1 \otimes_{\max} \cl T \subseteq \cl S_2 \otimes_{\max} \cl T;
$$

\item every state $\varphi$ on $\cl S_1$ has a state extension $\tilde \varphi$ on $\cl S_2$ such that if $\psi$ is positive linear functional on $\cl S_1$ with $\psi \leq \varphi$, then $\psi$ has positive extension $\tilde \psi$ on $\cl S_2$ with $\tilde \psi \leq \tilde \varphi$. Moreover, this can be achieved such a way that $\psi \mapsto \tilde \psi$ is a cp map from $[\varphi]$ to $[\tilde \varphi];$

\smallskip

\item $\cl S_1^{**} $ is w.r.i.\! in $ \cl S_2^{**}$, moreover the  inclusion of $\cl S_1^{**} $ into $ \cl S_2^{**}$ has a ucp inverse.

\end{enumerate}
\end{theorem}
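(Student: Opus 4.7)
The plan is to run the circle $(1) \Rightarrow (5) \Rightarrow (4) \Leftrightarrow (2) \Rightarrow (1)$ and to establish $(1) \Leftrightarrow (3)$, $(1) \Leftrightarrow (6)$, $(1) \Leftrightarrow (7)$ as separate equivalences. The heart of the theorem is the tensorial characterization $(1) \Leftrightarrow (5)$, which is the operator system analogue of Kirchberg's classical result for $C^*$-algebras.

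For $(1) \Rightarrow (5)$, given the ucp extension $\tilde i : \cl S_2 \to \cl S_1^{**}$, functoriality of $\otimes_{\max}$ supplies the ucp map $\tilde i \otimes \id : \cl S_2 \otimes_{\max} \cl T \to \cl S_1^{**} \otimes_{\max} \cl T$, so it suffices to argue that the canonical map $\cl S_1 \otimes_{\max} \cl T \to \cl S_1^{**} \otimes_{\max} \cl T$ is itself a complete order embedding. For this I would appeal to the CP--state representation of $\max$: positivity at every matrix level on $\cl S \otimes_{\max} \cl T$ is detected by cp maps $\cl S \to M_k(\cl T^*)$, and every such cp map out of $\cl S_1$ admits a weak$^*$-continuous extension to $\cl S_1^{**}$, producing enough positive matrix-valued functionals to detect positivity on both sides. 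The step $(5) \Rightarrow (4)$ is immediate by specialising $\cl T = M_n/J$ and restricting to the first matrix level. For $(4) \Leftrightarrow (2)$ I would use the identification $(M_n/J)^* \cong \cl R \subseteq M_n$ together with the canonical bijection $\mathrm{CP}(\cl S \otimes_{\max} M_n/J, \bb C) \cong \mathrm{CP}(\cl S, \cl R)$: the unital order embedding in $(4)$ says precisely that every state on $\cl S_1 \otimes_{\max} M_n/J$ extends to a state on $\cl S_2 \otimes_{\max} M_n/J$, which after translation through this bijection is exactly the ucp extension property $(2)$.

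For $(2) \Rightarrow (1)$ my strategy is to build $\tilde i$ by approximation: every finite-dimensional operator subsystem of $\cl S_1^{**}$ is a matrix system, so $(2)$ furnishes ucp extensions of each finite-dimensional piece, and a Banach--Alaoglu/Tychonoff compactness argument on the convex set of ucp maps $\cl S_2 \to \cl S_1^{**}$ (in the pointwise weak$^*$ topology) assembles these into a global $\tilde i$. For $(1) \Leftrightarrow (3)$, any $\varphi : \cl S_1 \to \cl T^{**}$ has a weak$^*$-continuous extension $\hat\varphi : \cl S_1^{**} \to \cl T^{**}$; composing $\hat\varphi$ with $\tilde i$ yields $(3)$, while taking $\cl T = \cl S_1$ and $\varphi = i$ recovers $(1)$. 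For $(6)$, given $\tilde i$ and a state $\varphi$ on $\cl S_1$ I set $\tilde \varphi = \hat\varphi \circ \tilde i$; the restriction of the adjoint $\tilde i^\dag$ to $[\varphi]$ sends $[\varphi]$ cp-ly into $[\tilde\varphi]$ via $\psi \mapsto \hat\psi \circ \tilde i$, and the converse direction of $(6)$ uses the same compactness-assembly idea as $(2) \Rightarrow (1)$. Finally $(7)$ follows from $(1)$ by taking the bidual $\tilde i^{**} : \cl S_2^{**} \to \cl S_1^{****}$ and post-composing with the canonical ucp projection $\cl S_1^{****} \to \cl S_1^{**}$, which exists because $\cl S_1^{**}$ is a dual space; restricting to $\cl S_1^{**} \subseteq \cl S_2^{**}$ produces the identity.

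The main obstacle I anticipate is justifying that the canonical inclusion $\cl S_1 \otimes_{\max} \cl T \hookrightarrow \cl S_1^{**} \otimes_{\max} \cl T$ is a complete order embedding, which underpins $(1) \Rightarrow (5)$. In the $C^*$-algebra case Kaplansky density and the von Neumann bidual structure make this transparent, but in the purely operator system setting the only tool available is CP--state duality for $\max$, and the weak$^*$-continuous extension of cp maps into the matrix-ordered dual $\cl T^*$ must be handled carefully at every matrix level. A secondary delicate point is ensuring that the compactness-based assemblies in $(2) \Rightarrow (1)$ and $(6) \Rightarrow (1)$ yield a globally compatible ucp extension rather than merely a net of partial ones.
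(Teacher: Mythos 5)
Your step $(2) \Rightarrow (1)$ contains a genuine gap, and it sits exactly where the paper's real work is. You assert that ``every finite-dimensional operator subsystem of $\cl S_1^{**}$ is a matrix system.'' This is false: a finite-dimensional operator system admits a unital complete order embedding into some $M_n$ only if it is exact, and non-exact finite-dimensional operator systems exist --- e.g.\ the span of $\{1, g_1, g_2, g_1^*, g_2^*\}$ inside $C^*(\bb F_2)$, which embeds in no matrix algebra. So condition $(2)$ cannot be applied to ``finite-dimensional pieces'' of $\cl S_1^{**}$ as you propose. The paper circumvents this by proving $(2) \Rightarrow (3)$: it represents $\cl T^{**} \subseteq B(\cl H)$ weak$^*$-wot homeomorphically and compresses by finite-rank projections, $\cl R_\alpha = P_\alpha \cl T^{**} P_\alpha$, which are matrix systems because they live in $B(\cl H_\alpha) \cong M_{\dim \cl H_\alpha}$ --- crucially, they are \emph{not} subsystems of $\cl T^{**}$. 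After extending each compression $P_\alpha \varphi(\cdot) P_\alpha$ via $(2)$ and passing to a point-ultraweak cluster point $\tilde\varphi$, one must still prove that the range of $\tilde\varphi$ lands back inside $\overline{\cl T}^{wot} \cong \cl T^{**}$; this is the nontrivial $t_\beta$-argument in the paper ($\tilde\varphi_\beta(x) = P_\beta t_\beta P_\beta$ with $t_\beta \to \tilde\varphi(x)$ in wot), and it is entirely absent from your sketch. Your claimed $(6) \Rightarrow (1)$, which defers to ``the same compactness-assembly idea,'' inherits the same problem; the paper instead routes $(6) \Rightarrow (4)$ through Effros systems, using the cp map $E : [\varphi] \to [\tilde\varphi]$ to push cp maps $M_n/J \to \cl S_1^*$ forward to $\cl S_2^*$ via the representation of the maximal tensor product.

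Two smaller points. First, your translation in $(4) \Leftrightarrow (2)$ is off by a normalization: under $\mathrm{CP}(\cl S \otimes_{\max} M_n/J, \bb C) \cong \mathrm{CP}(\cl S, \cl R)$, states on the tensor correspond to cp maps $\gamma$ with $\gamma(1)$ a state under the duality pairing, \emph{not} to ucp maps; the direction you actually need for your cycle, $(4) \Rightarrow (2)$, survives (positive-functional extension via Krein does give ucp extensions, as in the paper), but the converse as you state it would need the conjugation trick $\gamma \mapsto a^{-1/2}\gamma(\cdot)a^{-1/2}$ together with the observation that conjugated matrix systems are again matrix systems and an invertibility perturbation. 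Second, you never address $(7) \Rightarrow (1)$ (trivial --- restrict the ucp inverse $E : \cl S_2^{**} \to \cl S_1^{**}$ to $\cl S_2$ --- but it belongs in the proof). Your remaining steps are sound: $(1) \Rightarrow (5)$ via weak$^*$-continuous extension of cp maps $\cl S_1 \to M_k(\cl T^*)$ is essentially the proof of the lemma the paper cites (Lemma 6.5 of the KPTT nuclearity paper), your direct $(1) \Rightarrow (6)$ via $\psi \mapsto \hat\psi \circ \tilde i$ is actually cleaner than the paper's derivation of $(6)$ from $(5)$, and your $(1) \Rightarrow (7)$ matches the paper verbatim.
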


\begin{proof}
We shall follow the pattern (1) $\Rightarrow $ (5) $\Rightarrow $ (6) $\Rightarrow $ (4) $\Rightarrow $ (2) $\Rightarrow $ (3) $\Rightarrow $ (1) $\Leftrightarrow $ (7).

\smallskip

(1) $\Rightarrow $ (5): By Lemma 6.5 of \cite{KPTT Nuclearity}, for every operator system $\cl S$ and $\cl T$ we have a complete order inclusion $\cl S\otimes_{\max} \cl T \subset \cl S^{**}\otimes_{\max} \cl T $.  Now let $j$ be the inclusion of $\cl S_1$ in $\cl S_2$ and let $\tilde i: \cl S_2 \rightarrow \cl S_1^{**}$ be the ucp map extending the canonical inclusion $ i : \cl S_1 \hookrightarrow \cl S_1^{**}$. Functoriality of the maximal tensor product ensures that
$$
\cl S_1 \otimes_{\max} \cl T \xrightarrow{j \otimes id}
\cl S_2 \otimes_{\max} \cl T \xrightarrow{\tilde i \otimes id}
\cl S_1^{**} \otimes_{\max} \cl T
$$
are ucp maps. Since the composition is a complete order embedding the first map, $j \otimes id$, must have the same property.

\smallskip

(5) $\Rightarrow $ (6): Let $\varphi$ be state on $\cl S_1$ and let $Q: \cl S_2^* \rightarrow \cl S_1^*$ be the canonical quotient map. By the assumption we have
$
\cl S_1 \otimes_{\max} \cl [\varphi] \subseteq
\cl S_2 \otimes_{\max} \cl [\varphi]. 
$
So we must have  that every positive linear functional on $\cl S_1 \otimes_{\max} \cl [\varphi]$ extends to positive linear functional on $ \cl S_2 \otimes_{\max} \cl [\varphi] $. By employing the representation of the maximal tensor product, the later condition can be rephrased as follows:  every cp map $\gamma: [\varphi] \rightarrow \cl S_1^*$ lifts to a cp map $\tilde \gamma: [\varphi] \rightarrow \cl S_2^*$ so that $Q \circ \tilde \gamma = \gamma$. In particular if we let $\gamma$ be the canonical inclusion $[\varphi] \subset \cl S_1^*$ the result follows. In fact, for each positive linear functional $\psi \leq \varphi$, $\tilde \psi = \tilde \gamma (\psi)$ yields the desired map in (6).

\smallskip

(6) $\Rightarrow $ (4): Let $n$ be a positive integer and $J$ be a null subspace in $M_n$. As a first step we will prove that $\cl S_1 \otimes_{\max} (M_n/J) \subset \cl S_2 \otimes_{\max} (M_n/J)$ order isomorphically. To do so, we need to prove that every positive linear functional $f$ on $\cl S_1 \otimes_{\max} (M_n/J)$  extends to a positive linear functional on $\cl S_2 \otimes_{\max} (M_n/J)$. By the representation of the maximal tensor product, let $\gamma_f : M_n/J \rightarrow \cl S_1^*$ be the cp map associated with $f$. By rescaling $f$, if necessary, we may suppose that $\gamma_f(I_n+J)$ is a state in $\cl S_1^*$, say $\varphi$. Let $\tilde \varphi$
be a state extension of $\varphi$ on $\cl S_2$ as in (6) and $E: [\varphi] \rightarrow [\tilde \varphi]$ be the cp map.  Define $\gamma: M_n /J \rightarrow \cl S_2^*$ by $\gamma = E \circ \gamma_f$. Let $g$ be the corresponding positive linear functional on $\cl S_2 \otimes_{\max} (M_n/J)$. It is not difficult to show that $g$ extends $f$. This proves the desired order inclusion. Now for any operator system $\cl S$ we have 
$$M_k(\cl S \otimes_{\max} (M_n/J)  ) \cong 
\cl S \otimes_{\max} M_k(M_n/J) \cong
\cl S \otimes_{\max}  (M_k\otimes M_n) / (M_k \otimes J) . 
$$
Since $M_k \otimes J$ is a null-subspace of $M_k\otimes M_n$, it follows that $\cl S_1 \otimes_{\max} (M_n/J) \subset \cl S_2 \otimes_{\max} (M_n/J)$ holds completely order isomorphically. This finishes the proof.

\smallskip

(4) $\Rightarrow $ (2): Let $\cl R \subset M_n$ be a matrix system and let $\varphi: \cl S_1 \rightarrow \cl R$ be a ucp map. Note that $\cl R^* \cong M_n^* / J$ for some null-subspace $J \subset M_n^*$. By using the completely positive identification $M_n \cong M_n^*$ via $e_{ij} \mapsto \delta_{ij}/n$, where $\{e_{ij}\}$ is the canonical matrix units in $M_n$ and $\{\delta_{ij}\}$ is the corresponding dual basis, we may suppose that $\cl R^*$
is a matrix quotient by a null-subspace. Let $f_{\varphi}: \cl S_1 \otimes_{\max} \cl R^* \rightarrow \bb C$ be the associated positive linear functional. Since  $\cl S_1 \otimes_{\max} \cl R^* \subset \cl S_2 \otimes_{\max} \cl R^*$ order isomorphically, $f_{\varphi}$ extends to a positive linear functional $\tilde f_{\varphi} : \cl S_1 \otimes_{\max} \cl R^* \rightarrow \bb C$. Now it is not difficult to show that the corresponding cp map $\psi: \cl S_2 \rightarrow \cl R$ extends $\varphi$.

\smallskip

(2) $\Rightarrow $ (3): Let $\cl T$ be an operator system and $\varphi: \cl S_1 \rightarrow \cl T^{**}$ be a ucp map. We can concretely represent $\cl T^{**} \subseteq B(\cl H)$ such a way that $\cl T^{**} \cong \overline{\cl T}^{wot}$ weak*-wot homeomorphically.  Let $\{\cl H_\alpha\}$
be the net of finite dimensional Hilbert subspaces of $\cl H$ directed under inclusion. We let $P_{\alpha}$ be the projection onto $\cl H_\alpha$. Consider the ``matrix system" given by $\cl R_{\alpha}  = P_{\alpha} \cl T^{**} P_{\alpha} \subset B(\cl H)$. We let $\varphi_\alpha : \cl S_1 \rightarrow \cl R_\alpha $ by $s \mapsto P_{\alpha} \varphi(s) P_{\alpha}$. By our assumption, $\varphi_{\alpha}$ extends to ucp map $\tilde \varphi_{\alpha}: \cl S_2 \rightarrow \cl R_\alpha$. (Note that $\varphi$ and $\tilde \varphi$ are contractive cp maps when the image is extended to $B(\cl H)$.) Let $\tilde \varphi$ be a point-weak cluster point of the net $\{\tilde \varphi_{\alpha}\}$. We let $\{\tilde \varphi_\beta\}$ be the subnet converging $\tilde \varphi$ (in point-ultraweak topology).  First note that $\tilde \varphi(s) = \varphi(s)$ for any $s \in \cl S_1$. In fact, the sequence $\{\tilde \varphi_{\beta}(s)\} = \{P_{\beta} \varphi(s) P_{\beta} \}$ converges $\varphi(s)$ in wot and hence in ultraweak topology as it's bounded. Secondly we shall prove that $\tilde \varphi(\cl S_2) \subset \overline{\cl T}^{wot}$. Let us fix $x \in \cl S_2$ and set $\tilde \varphi(x)=y$. By definition $\tilde \varphi_{\beta}(x) \in P_{\beta}\overline{\cl T} P_{\beta}$, so let $t_\beta \in \overline{\cl T}$ such that $\tilde \varphi_{\beta}(x)= P_{\beta} t_{\beta} P_{\beta}$. Note that the sequence $\{\tilde \varphi_{\beta}(x)\}$ also converges to $y$ in wot. We claim that $t_\beta$ converges $y$ in wot. Given $h_1, h_2 \in \cl H$ we fix $\beta_0$ such that $h_1,  h_2 \in \cl H_{\beta_0}$, implying that for any $\beta \geq \beta_0$ $P_{\beta} h_i = h_i$ for $i=1,2$. Now for any such $\beta$
$$
\langle t_\beta h_1,h_2\rangle  - \langle y h_1,h_2\rangle
=
\langle P_\beta t_\beta P_\beta h_1,h_2\rangle  - \langle y h_1,h_2\rangle
 = \langle (\tilde \varphi_{\beta}(x) - y) h_1,h_2\rangle \rightarrow 0.
$$
Since $\{t_\beta\} \subset \overline{\cl T}^{wot}$, $y$ must belong to $\overline{\cl T}^{wot}$. This finishes the proof.

\smallskip

(3) $\Rightarrow $ (1): Follows from definition.

\smallskip

(1) $\Rightarrow $ (7): We simply take the second adjoint of the maps appear in the definition of w.r.i.:
$$
\xymatrix{
 \cl S_1 \ar@{_{(}->}[d]^{j} \ar@{^{(}->}[rr]^{i} & &  \cl S_1^{**} \\
 \cl S_2  \ar[rru]_{\tilde i} & &
}
\;\;\;
\Rightarrow \;\;\;
\xymatrix{
 \cl S_1^{**} \ar@{_{(}->}[d]^{j^{**}} \ar@{^{(}->}[rr]^{i^{**}} & &  \cl (S_1^{**})^{**} \ar@{->>}[r]^{P}   & \cl S_1^{**}  \\
 \cl S_2^{**}  \ar[rru]_{\tilde i^{**}} & &
}
$$
The canonical inclusion of $\cl S_1^{**}$ in $(\cl S_1^{**})^{**}$ coincides with the second adjoint $i^{**}$, so $\cl S_1^{**}$ is w.r.i$.$ in $\cl S_2^{**}$.
Consider the canonical inclusion  $\cl S_1^* \hookrightarrow \cl (S_1^*)^{**}$. Let $P$ be the adjoint of this inclusion, $\cl S_1^{****} \rightarrow \cl S_1^{**}$. The composition $P\circ \tilde i^{**} $ gives the desired map from $\cl S_2^{**}$ to $\cl S_1^{**}$.

\smallskip

(7) $\Rightarrow$ (1): Let $E: \cl S_2^{**} \rightarrow \cl S_1^{**}$ be the ucp inverse of $j^{**}: \cl S_1^{**} \hookrightarrow \cl S_2^{**}$. Clearly $E|_{j(\cl S_1)}$
yields the desired extension in the definition of w.r.i.
\end{proof}

 The following is a restatement of the above theorem with the pair $\cl S \subseteq \cl I(\cl S)$. The implication (1) $\Rightarrow $ (3) was already shown in \cite{KPTT Nuclearity} and (3) $\Rightarrow$ (1) is shown in \cite{Han}.
\begin{theorem}\label{thm WEP main} The following properties of an operator system $\cl S$ are equivalent:
\begin{enumerate}

\item $\cl S$ has the weak expectation property$;$

\item for every $n$ and null-subspace $J \subset M_n$ we have a complete order isomorphism
$$
\cl S\otimes_{\min} ( M_n / J) = \cl S\otimes_{\max} ( M_n / J);
$$

\smallskip

\item for every operator system $\cl T$ we have a complete order inclusion
$$
\cl S \otimes_{\max} \cl T \subseteq \cl I(\cl S) \otimes_{\max} \cl T,
$$
in other words, $\cl S$ is (el,max)-nuclear$;$
\item $\cl S$ is approximately injective for the matrix systems in the sense that for every $n$, matrix system $\cl R \subseteq M_n$, cp map $\varphi: \cl R \rightarrow  \cl S$ and $\epsilon >0$ there is a cp map $\tilde \varphi: M_n \rightarrow \cl S$
such that $\|\tilde \varphi|_{\cl R} - \varphi \|_{cb} \leq \epsilon.$
\end{enumerate}
\end{theorem}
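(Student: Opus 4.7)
The plan is to apply Theorem \ref{main thm} to the inclusion $\cl S \subseteq I(\cl S)$, noting that WEP of $\cl S$ is by definition the assertion that $\cl S$ is w.r.i. in $I(\cl S)$. The equivalence (1) $\Leftrightarrow$ (3) then reads off from Theorem \ref{main thm}(1) $\Leftrightarrow$ (5); as the excerpt remarks, these implications were already established in \cite{KPTT Nuclearity} and \cite{Han}.

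For (1) $\Leftrightarrow$ (2), I invoke Theorem \ref{main thm}(4) to identify WEP with the unital (complete) order embedding $\cl S \otimes_{\max} (M_n/J) \subseteq I(\cl S) \otimes_{\max} (M_n/J)$ for every $n$ and null-subspace $J \subset M_n$. The bridge to the identification $\cl S \otimes_{\min} (M_n/J) = \cl S \otimes_{\max} (M_n/J)$ is the auxiliary equality
$$
I(\cl S) \otimes_{\min} (M_n/J) = I(\cl S) \otimes_{\max} (M_n/J),
$$
which holds because $I(\cl S)$ is injective and $M_n/J$ has the operator system lifting property as a null-quotient of the nuclear algebra $M_n$. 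Combined with the injectivity of the minimal tensor product, the chain
$$
\cl S \otimes_{\min} (M_n/J) \subseteq I(\cl S) \otimes_{\min} (M_n/J) = I(\cl S) \otimes_{\max} (M_n/J) \supseteq \cl S \otimes_{\max} (M_n/J)
$$
squeezes the two tensor products on $\cl S \otimes (M_n/J)$ together under WEP, giving (2); the reverse implication reads the same chain in the opposite order.

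For (1) $\Leftrightarrow$ (4), I would exploit the representation of the minimal tensor product together with the projectivity of max. In the forward direction, a cp map $\varphi : \cl R \to \cl S$ from a matrix system $\cl R \subseteq M_n$ corresponds, via the representation of min, to a positive element $x_\varphi \in \cl R^* \otimes_{\min} \cl S \cong (M_n/J) \otimes_{\min} \cl S$. By (2) this element is already positive in $(M_n/J) \otimes_{\max} \cl S$, and since max is projective with respect to the complete quotient $M_n \to M_n/J$, for every $\epsilon > 0$ one obtains a positive lift $\tilde x$ in $M_n \otimes_{\max} \cl S = M_n \otimes_{\min} \cl S$ (equality by nuclearity of $M_n$) whose quotient image agrees with $x_\varphi$ up to an $\epsilon$-multiple of the order unit. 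Unwinding the representation, $\tilde x$ encodes a cp map $\tilde\varphi : M_n \to \cl S$ whose restriction to $\cl R$ cb-approximates $\varphi$, which is (4). The converse (4) $\Rightarrow$ (2) reverses this reasoning: any positive element of $\cl S \otimes_{\min} (M_n/J)$ encodes a cp map $\cl R \to \cl S$ that, by (4), is cb-approximated by restrictions of cp maps $M_n \to \cl S$; the associated elements are positive in $\cl S \otimes_{\max} M_n = \cl S \otimes_{\min} M_n$, their images in $\cl S \otimes_{\max} (M_n/J)$ remain positive, and a closure argument using the Archimedean property of the max cone promotes the original element to positivity in the max tensor.

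The main technical obstacle lies in the Archimedean bookkeeping at the projectivity step: the lift provided by projectivity of max is positive only up to an arbitrarily small multiple of the order unit, and one must carefully translate this $\epsilon$-slack into a genuine cb-norm approximation at the level of cp maps, rather than merely a pointwise or scalar-adjusted one. Once this translation is in place, the remaining arguments are routine manipulations with the tensorial identifications recorded in Section 1 and Theorem \ref{main thm}.
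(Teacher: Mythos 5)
Your proposal is correct and follows essentially the same route as the paper: both reduce (1) $\Leftrightarrow$ (3) to Theorem \ref{main thm}, pass between (2) and the embedding $\cl S \otimes_{\max} (M_n/J) \subseteq I(\cl S) \otimes_{\max} (M_n/J)$ via the equality of min and max on $I(\cl S) \otimes (M_n/J)$ (lifting property of $M_n/J$ plus injectivity of min), and derive (4) from the projectivity of max, with the $\epsilon$-unit slack $\epsilon(1_{\cl S} \otimes f)$ translating into a cb-perturbation of norm $\epsilon$ because the cb-norm of a state is $1$ --- exactly the resolution of the ``Archimedean bookkeeping'' issue you flag. Your only deviations are cosmetic: you cite Theorem \ref{main thm}(4) directly where the paper re-runs the functional-extension argument for the pair $\cl S \subseteq I(\cl S)$, and you fold the paper's intermediate statement (4') (that $\cl S \otimes M_n \rightarrow \cl S \otimes_{\min} (M_n/J)$ is a quotient map) into the lifting argument rather than isolating it, while your sketch of (4) $\Rightarrow$ (2) --- where the small difference of cp maps must be lifted to $\cl S \otimes M_n$, where min and max agree, before the Archimedean closure argument can be applied in the max cone --- is at the same level of detail as the paper's ``simply reversing the steps.''
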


\begin{proof} The equivalence of (1) and (3) follows from the previous theorem. Before we proceed we remark that (2) can be rephrased as follows:

\begin{enumerate}
\item[(2')] for any $n$ and null-subspace $J \subset M_n$ we have $\cl S\otimes_{\max} ( M_n / J) \subseteq I(\cl S)\otimes_{\max} ( M_n / J)$.
\end{enumerate}
In fact, the quotient $M_n/J$ has the lifting property so the min and the max tensor products coincide on $I(\cl S) \otimes (M_n / J)$. Simply replacing max by min, and using the injectivity of min, we get the equivalence of (2) and (2'). 

\smallskip

Clearly (3) implies (2'). Now we will show that (2') implies (1). Let $\cl R \subseteq M_n$ be a matrix system and let $\varphi: \cl S \rightarrow \cl R$ be a ucp map. By the representation of the maximal tensor product let $f_\varphi: \cl S \otimes_{\max} \cl R^* \rightarrow \bb C$ be the associated positive linear functional. Since $\cl R^* \cong M_n/J$ (see Subsection 1.2) $f_\varphi$ extends to a positive linear functional $\tilde f_\varphi: I(\cl S) \otimes_{\max} \cl R^* \rightarrow \bb C$. Now using the representation of max again, we obtain a cp map $\tilde \varphi : I(\cl S) \rightarrow \cl R$, which extends $\varphi$.  By using the previous theorem we deduce that $\cl S$ is w.r.i$.$ in $I(\cl S)$, so (1) follows.

The statement in (4) can be rewritten in the tensor form:

(4') $\cl S \otimes M_n \rightarrow \cl S \otimes_{\min} (M_n/J) $ is a quotient map for any $n$ and null-subspace $J\subset M_n$.

\noindent To see this let us assume (4'). Let $\cl R \subseteq M_n$ be an operator system and let $\varphi: \cl R \rightarrow \cl S$ be a cp map. By the representation of the minimal tensor product, let $u$ be the associated positive element in $\cl S \otimes_{\min} \cl R^*.$ We declare a faithful state $f\in \cl R^*$ as unit. Now, by the assumption, $\cl S\otimes M_n^* \xrightarrow{id \otimes Q} \cl S \otimes_{\min} \cl R^*$ is a quotient map, so for any $\epsilon >0$, there exists an element $U_\epsilon \geq 0$ in $\cl S\otimes M_n^*$ such that $id_{\cl S} \otimes Q (U_\epsilon) = u + \epsilon (1_{\cl S} \otimes f)$. $U_\epsilon$ corresponds to cp map $\tilde \varphi_\epsilon: M_n \rightarrow \cl S$. One can verify that $\tilde \varphi_\epsilon|_{\cl R} - \varphi = \epsilon f(\cdot)1_{\cl S}$. Since cb-norm of a state is 1 we obtain (4). Simply reversing the steps, the reader can verify that (4) implies (4'), hence they are equivalent.

We complete our proof by showing (2) and (4') are equivalent. The projectivity of the maximal tensor product \cite{Han} ensures that the operator system structure on $\cl S \otimes ( M_n/J)$ arising from the quotient map $\cl S \otimes M_n$ coincides with the maximal tensor product. Therefore (2) and (4') are equivalent. This finishes our proof.
\end{proof}

\noindent \textbf{Question:} Let $J = span\{(1,1,1,-1,-1,-1)\} \subseteq \ell_6^\infty$. The quotient operator system $\ell^\infty_6 / J$ coincides with the dual operator system $\cl W_6^*$ \cite{FKPT-discrete}. We show in \cite{kav wri C} that, for C*-algebras $\cl A \subseteq \cl B$, $\cl A$ is w.r.i$.$ in $\cl B$ if and only if
$$
\cl A \otimes_{\max}\left( \ell_6^\infty/J \right) \subseteq \cl B \otimes_{\max} \left( \ell_6^\infty/J \right).
$$
We don't know if such a tensorial inclusion implies w.r.i$.$ for operator systems. In particular, a unital C*-algebra $\cl A$ has WEP if and only if $\cl A \otimes_{\min} \left( \ell_6^\infty/J \right) =  \cl A \otimes_{\max} \left( \ell_6^\infty/J \right) $ \cite{kavruk2012}. We don't know if a similar property characterizes WEP for general operator system.

\smallskip

The following is a restatement of Theorem \ref{main thm} for the pair $\cl S \subseteq C^*_u(\cl S)$. Technically they are all equivalent to $\cl S$ being w.r.i$.$ in $C^*_u(\cl S)$. The equivalence of (1) and (3) was known to K$.$ H$.$ Han.

\begin{theorem}\label{thm C*-main} The following are equivalent for an operator system $\cl S$:

\begin{enumerate}

\item $\cl S$ is $($c,max$)$ nuclear, that is, for any operator system $\cl T$ we have $\cl S \otimes_{\rm c} \cl T = \cl S \otimes_{\max} \cl T$;

\smallskip

\item for every $n$ and null-subspace $J \subset M_n$, we have $\cl S \otimes_{\rm c} (M_n/J) = \cl S \otimes_{\max} (M_n/J)$;

\smallskip

\item $\cl S$ is a C*-system, that is, $\cl S^{**}$ has structure of a C*-algebra.

\end{enumerate}

\end{theorem}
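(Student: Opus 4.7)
The plan is to use Theorem~\ref{main thm} applied to the pair $\cl S \subseteq C^*_u(\cl S)$ as the pivot, and show that all three conditions are equivalent to $\cl S$ being w.r.i.\! in $C^*_u(\cl S)$.

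The first step is to record the canonical complete order embedding
\[
\cl S \otimes_{\rm c} \cl T \subseteq C^*_u(\cl S) \otimes_{\max} \cl T
\]
for every operator system $\cl T$. That positive elements on the left are positive on the right follows by restricting any commuting pair $(\pi,\psi)$, with $\pi:C^*_u(\cl S)\to B(\cl H)$ a $*$-homomorphism, to the commuting pair $(\pi|_{\cl S},\psi)$. For the converse, a commuting pair of ucp maps $\varphi:\cl S \to B(\cl H)$, $\psi:\cl T \to B(\cl H)$ extends via the universal property of $C^*_u(\cl S)$ to a $*$-homomorphism $\pi:C^*_u(\cl S) \to B(\cl H)$; the image $\pi(C^*_u(\cl S))$ is the C*-algebra generated by $\varphi(\cl S)$, and hence still commutes with $\psi(\cl T)$. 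Since c and max agree when one tensorand is a C*-algebra, $(\pi,\psi)$ detects positivity in $C^*_u(\cl S) \otimes_{\max} \cl T$, which gives the reverse direction.

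With this embedding in hand, (1) becomes the statement $\cl S \otimes_{\max} \cl T \subseteq C^*_u(\cl S) \otimes_{\max} \cl T$ for every $\cl T$, which by Theorem~\ref{main thm}~(1)$\Leftrightarrow$(5) is precisely w.r.i.\! of $\cl S$ in $C^*_u(\cl S)$; applying the (1)$\Leftrightarrow$(4) direction of the same theorem, (2) is likewise equivalent to w.r.i., so (1)$\Leftrightarrow$(2). For (3)$\Rightarrow$w.r.i., if $\cl S^{**}$ is a C*-algebra, the ucp map $\cl S \hookrightarrow \cl S^{**}$ extends, by the universal property of $C^*_u(\cl S)$, to a $*$-homomorphism $C^*_u(\cl S) \to \cl S^{**}$, which is the desired ucp extension. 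For the converse, if $\cl S$ is w.r.i.\! in $C^*_u(\cl S)$, Theorem~\ref{main thm}~(7) supplies a ucp map $E:C^*_u(\cl S)^{**} \to \cl S^{**}$ inverting the canonical inclusion $j^{**}:\cl S^{**}\hookrightarrow C^*_u(\cl S)^{**}$. Composing with $j^{**}$ produces a ucp idempotent $j^{**}\circ E$ on the von Neumann algebra $C^*_u(\cl S)^{**}$ whose range is $\cl S^{**}$, and the Choi--Effros theorem on ranges of ucp projections endows $\cl S^{**}$ with a C*-algebra structure via $a \cdot b = E(ab)$, making $\cl S$ a C*-system.

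The main subtlety is the initial embedding lemma $\cl S \otimes_{\rm c} \cl T \subseteq C^*_u(\cl S) \otimes_{\max} \cl T$, which relies on the fact that the image of the universal $*$-extension $\pi$ of a ucp map $\varphi$ is no larger, in the commutation sense, than the original image; once this is pinned down, the theorem follows by routine chaining of Theorem~\ref{main thm} with the universal property of $C^*_u(\cl S)$ and Choi--Effros.
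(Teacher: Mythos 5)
Your proposal is correct and follows essentially the same route as the paper's proof: both reduce (1) and (2) to $\cl S$ being w.r.i.\ in $C^*_u(\cl S)$ via the identification of the commuting tensor product with the structure induced by $C^*_u(\cl S)\otimes_{\max}\cl T$ (items (5) and (4) of Theorem \ref{main thm}), obtain (3) $\Rightarrow$ w.r.i.\ from the universal property of $C^*_u(\cl S)$, and get w.r.i.\ $\Rightarrow$ (3) from item (7) of Theorem \ref{main thm} combined with the Choi--Effros theorem on ranges of ucp idempotents. The only difference is cosmetic: you prove the embedding $\cl S \otimes_{\rm c} \cl T \subseteq C^*_u(\cl S) \otimes_{\max} \cl T$ directly (and correctly, via the universal extension landing in $\psi(\cl T)'$), whereas the paper simply cites this fact as known.
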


\begin{proof} The operator system structure on $\cl S \otimes \cl T$ arising from the inclusion $C^*_u(\cl S) \otimes_{\max} \cl T$ coincides with the commuting tensor product. Therefore the statements (1) and (2) are both equivalent to $\cl S$ being w.r.i$.$ in $C^*_u(\cl S)$. By using the universal property of $C^*_u(\cl S)$, one can easily verify that every C*-system $\cl S$ is w.r.i$.$ in $C^*_u(\cl S)$. Therefore (3) implies (1) and (2). Finally, if $\cl S$ is w.r.i$.$ in $C^*_u(\cl S)$ then $\cl S^{**}$ is w.r.i$.$  in $C^*_u(\cl S)^{**}$, with a ucp inverse. Now by a fundamental result of Choi and Effros (see Theorem 15.2 in \cite{PaulsenBook}) the ucp idempotent from $C^*_u(\cl S)^{**}$ onto $\cl S^{**}$ declares a C*-algebra structure on $\cl S^{**}$, hence $\cl S$ is a C*-system.
\end{proof}

We end this section with a brief summary of relative double commutant injectivity in the operator system category. This notion is introduced in \cite{AB} (where the author prefers to use w.r.i$.$ rather than r.d.c.i$.$). For the completeness of this work we overview the nuclearity related aspects of this property. As we defined at the introduction, an operator subsystem $\cl S_1$ of an operator system $\cl S_2$ is said to have \textit{r.d.c.i}$.$ in $\cl S_2$ if every representation $i: \cl S_1 \hookrightarrow B(\cl H)$ extends to a ucp map $\tilde i: \cl S_2 \rightarrow B(\cl H)$ such that $\tilde i (\cl S_2) \subseteq i(\cl S_1)''$. We remark that an operator system $\cl S$ has r.d.c.i$.$ in its injective envelope $I(\cl S)$ if and only if it has the double commutant injectivity property (see Section 7 of \cite{KPTT Nuclearity} for this nuclearity related property.) In the following $C^*(\bb F_\infty)$ denotes the full group C*-algebra of the free group on countably infinite number of generators.  

\begin{theorem}[\cite{AB}] The following are equivalent for $\cl S_1 \subseteq \cl S_2:$

\begin{enumerate}

\item $\cl S_1$ has r.d.c.i$.$ in $\cl S_2$;

\item for any operator system $\cl T$ we have $\cl S_1 \otimes_{\rm c} \cl T \subseteq \cl S_2 \otimes_{\rm c} \cl T$;

\item for any unital C*-algebra $\cl A$ we have $\cl S_1 \otimes_{\max} \cl A \subseteq \cl S_2 \otimes_{\max} \cl A$;

\item we have $\cl S_1 \otimes_{\max} C^*(\bb F_\infty) \subseteq \cl S_2 \otimes_{\max} C^*(\bb F_\infty)$;

\item $C_u^*(\cl S_1)$ is w.r.i$.$ in $C_u^*(\cl S_2)$.

\end{enumerate}

\end{theorem}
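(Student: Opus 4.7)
The plan is to establish the cycle $(1) \Rightarrow (2) \Rightarrow (3) \Rightarrow (4) \Rightarrow (1)$ and to obtain $(1) \Leftrightarrow (5)$ separately through the universal property of $C^*_u$. The first three implications of the cycle are short consequences of the commuting-tensor definition and the identity $\cl S \otimes_{\rm c} \cl A = \cl S \otimes_{\max} \cl A$ for C*-algebras $\cl A$ recalled in the preliminaries; the substantive step is $(4) \Rightarrow (1)$, which relies on Choi's multiplicative-domain theorem.

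For $(1) \Rightarrow (2)$, I would take ucp maps $\varphi : \cl S_1 \to B(\cl H)$ and $\psi : \cl T \to B(\cl H)$ with commuting ranges, apply r.d.c.i.\ to extend $\varphi$ to $\tilde \varphi : \cl S_2 \to B(\cl H)$ with $\tilde \varphi(\cl S_2) \subseteq \varphi(\cl S_1)''$, and observe that $\psi(\cl T) \subseteq \varphi(\cl S_1)'$ means $(\tilde \varphi, \psi)$ still have commuting ranges. Testing a matricial element $X \in M_n(\cl S_1 \otimes \cl T)$ that is positive in $\cl S_2 \otimes_{\rm c} \cl T$ against $(\tilde \varphi, \psi)$ gives the same value as against $(\varphi, \psi)$ on $\cl S_1 \otimes \cl T$, so $X$ is positive in $\cl S_1 \otimes_{\rm c} \cl T$, yielding the complete order embedding. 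Then $(2) \Rightarrow (3)$ is the specialization $\cl T = \cl A$, and $(3) \Rightarrow (4)$ is the further specialization $\cl A = C^*(\bb F_\infty)$.

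For the main step $(4) \Rightarrow (1)$, given $i : \cl S_1 \hookrightarrow B(\cl H)$, set $\cl N = i(\cl S_1)'$ and fix a unital $*$-epimorphism $\sigma : C^*(\bb F_\infty) \twoheadrightarrow \cl N$, reducing by an inductive-limit argument to the case that $\cl N$ is countably generated. Since $i$ and $\sigma$ have commuting ranges, they induce a ucp map $\Psi : \cl S_1 \otimes_{\max} C^*(\bb F_\infty) \to B(\cl H)$. By (4) and Arveson extension, $\Psi$ extends to a ucp $\tilde \Psi : \cl S_2 \otimes_{\max} C^*(\bb F_\infty) \to B(\cl H)$. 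Define $\tilde i(s) := \tilde \Psi(s \otimes 1)$. Because $\tilde \Psi|_{1 \otimes C^*(\bb F_\infty)} = \sigma$ is multiplicative, Choi's multiplicative-domain theorem forces $\tilde \Psi(s \otimes f) = \tilde i(s)\sigma(f) = \sigma(f)\tilde i(s)$ for all $s \in \cl S_2$ and $f \in C^*(\bb F_\infty)$; hence $\tilde i(\cl S_2)$ commutes with $\cl N$, and $\tilde i(\cl S_2) \subseteq \cl N' = i(\cl S_1)''$, establishing r.d.c.i. The principal obstacle here is the cardinality of $\cl N$ when $\cl H$ is not separable; one handles it by working with the net of countably generated subalgebras of $\cl N$, assembling the corresponding extensions, and passing to a point-weak cluster point as in the proof of $(2) \Rightarrow (3)$ of Theorem \ref{main thm}.

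Finally, for $(1) \Leftrightarrow (5)$: in the direction $(1) \Rightarrow (5)$ I take the universal representation $\pi : C^*_u(\cl S_1) \to B(\cl H)$ so that $\pi(\cl S_1)'' = C^*_u(\cl S_1)^{**}$, apply r.d.c.i.\ with $i = \pi|_{\cl S_1}$ to produce a ucp map $\tilde i : \cl S_2 \to C^*_u(\cl S_1)^{**}$, and lift it to a $*$-hom $\Phi : C^*_u(\cl S_2) \to C^*_u(\cl S_1)^{**}$ via the universal property; uniqueness of $*$-hom lifts identifies $\Phi$ composed with the canonical $*$-hom $C^*_u(\cl S_1) \to C^*_u(\cl S_2)$ as the canonical inclusion $C^*_u(\cl S_1) \hookrightarrow C^*_u(\cl S_1)^{**}$, which in particular forces injectivity of $C^*_u(\cl S_1) \to C^*_u(\cl S_2)$, and $\Phi$ witnesses w.r.i. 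Conversely, for $(5) \Rightarrow (1)$, given $i : \cl S_1 \hookrightarrow B(\cl H)$, extend to a $*$-hom $\pi : C^*_u(\cl S_1) \to B(\cl H)$, compose the w.r.i.\ ucp $E : C^*_u(\cl S_2) \to C^*_u(\cl S_1)^{**}$ with the normal extension $\pi^{**}$ (whose image is $\pi(C^*_u(\cl S_1))'' = i(\cl S_1)''$), and restrict to $\cl S_2$ to obtain the desired $\tilde i$.
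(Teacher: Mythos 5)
The paper does not actually prove this theorem: it is quoted from \cite{AB} as background material, so there is no in-paper argument to measure you against. Judged on its own terms, your route --- the cycle $(1) \Rightarrow (2) \Rightarrow (3) \Rightarrow (4) \Rightarrow (1)$ with a multiplicative-domain argument at the crux, plus $(1) \Leftrightarrow (5)$ via the universal property of $C_u^*$ --- is the natural one and is sound in outline. In particular, your self-correction on cardinality in $(4) \Rightarrow (1)$ is genuinely needed: a surjection $\sigma : C^*(\bb F_\infty) \rightarrow \cl N = i(\cl S_1)'$ exists only when $\cl N$ is separable, and the repair you describe (run the argument for each countably generated unital $\cl C \subseteq \cl N$, which is a quotient of $C^*(\bb F_\infty)$, obtaining $\tilde i_{\cl C}$ with range in $\cl C'$, then take a point-weak cluster point of the net; commutation with each fixed $n \in \cl N$ survives the limit by separate weak continuity of multiplication) does work. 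The $(1) \Leftrightarrow (5)$ arguments are correct as written: you use the uniqueness clause of the universal property, the fact that $\cl S_1$ generates $C_u^*(\cl S_1)$ (so $\pi(\cl S_1)'' = \pi(C_u^*(\cl S_1))'' \cong C_u^*(\cl S_1)^{**}$ for the universal representation), and the normal extension of $\pi$ to $C_u^*(\cl S_1)^{**}$, all legitimately; as a bonus your $(1) \Rightarrow (5)$ even produces a $*$-homomorphic weak expectation.

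Two steps fail as literally written and need routine patches. First, in $(1) \Rightarrow (2)$ you apply r.d.c.i$.$ to an arbitrary ucp map $\varphi : \cl S_1 \rightarrow B(\cl H)$, but the definition in this paper (which the author emphasizes) covers only complete order embeddings $i : \cl S_1 \hookrightarrow B(\cl H)$. Patch: given a commuting pair $(\varphi, \psi)$, fix any embedding $j : \cl S_1 \hookrightarrow B(\cl K)$ and a state $f$ on $\cl T$, and replace the pair by $(\varphi \oplus j, \; \psi \oplus f(\cdot)1_{\cl K})$ on $\cl H \oplus \cl K$; the first leg is now an embedding, the ranges still commute, and since $X \in M_n(\cl S_1 \otimes \cl T)$ the extended pair evaluates on $X$ to the block-diagonal operator $(\varphi \otimes \psi)^{(n)}(X) \oplus (j \otimes f(\cdot)1)^{(n)}(X)$, whose positivity gives $(\varphi \otimes \psi)^{(n)}(X) \geq 0$. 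Second, Choi's multiplicative-domain theorem is a statement about ucp maps on C*-algebras, whereas your $\tilde \Psi$ is defined only on the operator system $\cl S_2 \otimes_{\max} C^*(\bb F_\infty)$, where the products $(s \otimes 1)(1 \otimes f)$ you manipulate are not available. Patch: use $\cl S_2 \otimes_{\max} C^*(\bb F_\infty) = \cl S_2 \otimes_{\rm c} C^*(\bb F_\infty) \subseteq C_u^*(\cl S_2) \otimes_{\max} C^*(\bb F_\infty)$ completely order isomorphically, and let Arveson extend $\Psi$ all the way to this ambient C*-algebra before invoking multiplicative domains; since the extension still restricts to the $*$-homomorphism $\sigma_{\cl C}$ on $1 \otimes C^*(\bb F_\infty)$, that subalgebra lies in its multiplicative domain and your commutation identity follows. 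With these two repairs the proposal is a complete proof.
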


\section{C*-systems and Quasi-nuclearity}

An operator system $\cl S$ is said to be \textit{nuclear} if it is (min,max)-nuclear, that is, for every operator system $\cl T$ the minimal and the maximal tensor product on $\cl S \otimes \cl T$ coincide. A unital C*-algebra $\cl A$ is classically defined as \textit{nuclear} if $\cl A \otimes_{\min} \cl B = \cl A \otimes_{\max} \cl B $ for every C*-algebra $\cl B$. It is elementary to verify that $\cl A$ is a nuclear C*-algebra if and only if $\cl A $ is nuclear as an operator system. In a recent study \cite{Kavruk NP} we extend a classical result of Namioka and Phelps \cite{NP} to non-commutative setting by proving that Namioka and Phelps' test system 
$$\cl W_6  = 
\{ (a_i)_{i=1}^6 : a_1 + a_2 + a_3 =  a_4 + a_5 + a_6   \} \subseteq \ell_6^\infty$$ detects nuclear C*-algebras. More precisely, a unital C*-algebra $\cl A$ is nuclear if and only if the minimal and the maximal tensor product on $\cl A \otimes \cl W_6$ coincide. Unfortunately such a property for general operator systems remains open. Here is an extension to C*-systems:

\begin{theorem} A C*-system  $\cl S$ is nuclear if and only if we have a canonical complete order isomorphism
$\cl S \otimes_{\min} \cl W_{6} = \cl S \otimes_{\max} \cl W_{6}$.
\end{theorem}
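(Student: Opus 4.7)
The forward direction is immediate from the definition of nuclearity: if $\cl S$ is nuclear then $\cl S \otimes_{\min} \cl T = \cl S \otimes_{\max} \cl T$ for every operator system $\cl T$, in particular for $\cl T = \cl W_6$.

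For the converse, my strategy is to reduce to the Namioka--Phelps-type theorem for C*-algebras proved in \cite{kav wri C} (which states that a unital C*-algebra $\cl A$ is nuclear if and only if $\cl A \otimes_{\min} \cl W_6 = \cl A \otimes_{\max} \cl W_6$), apply it to the C*-algebra $\cl S^{**}$ (which exists because $\cl S$ is a C*-system), and then descend the resulting nuclearity back to $\cl S$. The basic tools are the complete order embedding $\cl S \otimes_{\max} \cl T \subseteq \cl S^{**} \otimes_{\max} \cl T$ from Lemma 6.5 of \cite{KPTT Nuclearity} and the inclusion $\cl S \otimes_{\min} \cl T \subseteq \cl S^{**} \otimes_{\min} \cl T$ coming from injectivity of the minimal tensor product.

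First I would lift the hypothesis to the bidual: establish $\cl S^{**} \otimes_{\min} \cl W_6 = \cl S^{**} \otimes_{\max} \cl W_6$. A positive element of $\cl S^{**} \otimes_{\min} \cl W_6$ is a tuple $(x_1,\ldots,x_6) \in ((\cl S^{**})^+)^6$ satisfying $\sum_{i=1}^3 x_i = \sum_{i=4}^6 x_i$, and I would approximate such a tuple in weak* by $\cl W_6$-constrained tuples from $\cl S^+$, using weak*-density of $\cl S$ in $\cl S^{**}$ together with finite-dimensionality of $\cl W_6$; the hypothesis makes the approximants positive in $\cl S \otimes_{\max} \cl W_6$, and passage to the weak* limit inside the complete order embedding $\cl S \otimes_{\max} \cl W_6 \subseteq \cl S^{**} \otimes_{\max} \cl W_6$ delivers max-positivity of the original tuple. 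The C*-algebra version of the theorem from \cite{kav wri C} then yields that $\cl S^{**}$ is nuclear as a C*-algebra, hence as an operator system.

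The descent to $\cl S$ is then formal: for any operator system $\cl T$, $\cl S^{**} \otimes_{\min} \cl T = \cl S^{**} \otimes_{\max} \cl T$, and the two embeddings of $\cl S \otimes \cl T$ into this common space identify both positive cones of $\cl S \otimes \cl T$ with the same restriction, so they coincide; an identical argument at each matrix level $M_n$ yields the full complete order equality $\cl S \otimes_{\min} \cl T = \cl S \otimes_{\max} \cl T$. The main obstacle is the bidual lifting step, since the weak*-approximation must simultaneously respect coordinate-wise positivity and the linear constraint defining $\cl W_6$; a clean Kaplansky-type argument (or, alternatively, an identification of $(\cl S \otimes_{\alpha} \cl W_6)^{**}$ with $\cl S^{**} \otimes_{\alpha} \cl W_6$ for $\alpha \in \{\min,\max\}$, exploiting the finite dimension of $\cl W_6$) is required to carry it out.
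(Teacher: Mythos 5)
Your forward direction and your final descent step are both fine (the descent is exactly the standard squeeze: $\cl S \otimes_{\min} \cl T \subseteq \cl S^{**} \otimes_{\min} \cl T$ by injectivity of min, $\cl S \otimes_{\max} \cl T \subseteq \cl S^{**} \otimes_{\max} \cl T$ by Lemma 6.5 of \cite{KPTT Nuclearity}, and the two inclusions have the same underlying linear map). But the pivotal bidual lifting step is not merely ``an obstacle requiring a Kaplansky-type argument'' --- it is false, so no argument can carry it out. By the very theorem from \cite{kav wri C} that you invoke, for a unital C*-algebra $\cl A$ the identity $\cl A \otimes_{\min} \cl W_6 = \cl A \otimes_{\max} \cl W_6$ is \emph{equivalent} to nuclearity of $\cl A$; hence your claimed lifting would assert that nuclearity of $\cl A$ implies nuclearity of $\cl A^{**}$. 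Take $\cl A = K(\cl H) + \bb C I$, the unitized compacts: this is a nuclear C*-algebra (in particular a C*-system), yet $\cl A^{**} \cong B(\cl H) \oplus \bb C$ is injective but not nuclear, so $\cl A^{**} \otimes_{\min} \cl W_6 \neq \cl A^{**} \otimes_{\max} \cl W_6$. The conceptual slip is that nuclearity of $\cl S$ corresponds to \emph{injectivity} of the von Neumann algebra $\cl S^{**}$ (Kirchberg's criterion, quoted at the end of the paper's Section 3), not to nuclearity of $\cl S^{**}$; applying the C*-algebraic Namioka--Phelps test to $\cl S^{**}$ demands the strictly stronger, generally false property. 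This also explains why your weak*-limit mechanism cannot work: positive linear functionals on $\cl S^{**} \otimes_{\max} \cl W_6$ correspond to cp maps $\cl W_6 \to \cl S^{***}$, which need not be weak*-continuous, so the max cone of $\cl S^{**} \otimes \cl W_6$ is Archimedean (norm) closed but not weak*-closed over the cone of $\cl S \otimes_{\max} \cl W_6$; likewise the tempting identification $(\cl S \otimes_{\alpha} \cl W_6)^{**} = \cl S^{**} \otimes_{\alpha} \cl W_6$ fails at the level of cones even though $\cl W_6$ is finite-dimensional --- the bidual cone is in general strictly larger.

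The paper's proof avoids the bidual lifting entirely and instead targets injectivity. It fixes a von Neumann algebra embedding $i : \cl S^{**} \to B(\cl H)$, sets $i_0 = i|_{\cl S}$, and identifies the Effros system $[i_0]$ with the commutant $i_0(\cl S)'$ (via weak extensions $\psi \mapsto \psi^{**}$ and Arveson's Radon--Nikodym theory, which gives $[i] = i(\cl S^{**})' = i_0(\cl S)'$). The hypothesis, rewritten as the complete order embedding $\cl S \otimes_{\max} \cl W_6 \subseteq \cl S \otimes \ell_6^\infty$, is then used to extend any cp map $\cl W_6 \to [i_0]$ to a cp map on $\ell_6^\infty$; by \cite{kavruk2012} this extension property gives WEP, hence injectivity, of the von Neumann algebra $i_0(\cl S)'$. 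Effros--Lance \cite[Prop.~3.7]{EffrosLance} transfers injectivity to $i_0(\cl S)'' = i(\cl S^{**}) \cong \cl S^{**}$, and Kirchberg's criterion then yields nuclearity of $\cl S$. If you want to rescue the spirit of your approach, the property to establish on the bidual is injectivity, not nuclearity --- and detecting injectivity of $\cl S^{**}$ from a tensor hypothesis on $\cl S$ is precisely what the commutant detour accomplishes.
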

\begin{proof} We will only prove the non-trivial direction $(\Leftarrow)$. Let $i$ be a von Neumann algebra  embedding of $\cl S^{**} $ into a  $B(\cl H)$, and let $i_0 = i|_{\cl S}$. Let $[i_0]\subseteq CB(\cl S, B(\cl H))$ be the Effros system. We will first prove that $[i_0]\cong i_0(S)' \subseteq B(\cl H)$ then show that $[i_0]$ has WEP. First observe that we have a canonical embedding of $i_0(S)'$ into $[i_0]$ given by $A\mapsto Ai_0$ (here $Ai_0$ is given by $s\mapsto Ai_0(s)$). We also have a natural surjective map $R: [i] \rightarrow [i_0]$ given by the restriction: for a cp map $\psi: S^{**} \rightarrow B(\cl H)$ with $\psi \leq i$ we define $R(\psi) = \psi|_S$. Surjectivity follows from the fact that whenever $\psi \leq i_0$ then the weak extension $\psi^{**}: \cl S^{**} \rightarrow B(\cl H)$ satisfies $\psi^{**} \leq i$ and  $R(\psi^{**}) = \psi$. This shows that $R$ is a quotient map. In this particular case, by Radon-Nikodym theory of Arveson \cite{Arveson},  $[i]$ coincides with $i(\cl S^{**})' = i_0(\cl S)'$. As $R$ is both injective and projective it must be a bijective complete order isomorphism. This proves that $[i_0]\cong i_0(S)'$, in particular, $[i_0]$ has structure of a von Neumann algebra. Next we will show that $[i_0]$ has WEP, equivalently is injective.  Our assumption is equivalent to the statement that
$$
\cl S \otimes_{\max} \cl W_{6} \subseteq \cl S \otimes_{} \ell^\infty_6
$$
completely order isomorphically.
Let $\gamma: \cl W_6 \rightarrow [i_0]$ be a cp map. By the representation of the maximal tensor product we obtain a cp map $\Gamma:\cl S \otimes_{\max} \cl W_6 \rightarrow B(\cl H)$. $\Gamma$ extends to a cp map $\tilde \Gamma$ from $\cl S \otimes_{} \ell^\infty_6$ into  $ B(\cl H)$. Going backward we obtain a cp map $\tilde \gamma: \ell_6^\infty \rightarrow [i_0]$.
By \cite{kavruk2012} it follows that $[i_0]\cong i_0(S)'$ has WEP. A fundamental result of Effros and Lance \cite[Prop.3.7]{EffrosLance}, then, implies that $i_0(\cl S)''= i(\cl S^{**})'' = i(\cl S^{**}) \cong \cl S^{**}$ is an injective von Neumann algebra. Finally, by Kirchberg's characterization of nuclearity \cite{Kirchberg94}, we conclude that $\cl S$ is nuclear.
\end{proof}

\noindent \textbf{Question:} If $\cl S \otimes_{\min} \cl W_{6} = \cl S \otimes_{\max} \cl W_{6}$ can we conclude that $\cl S$ is nuclear?

\smallskip

\noindent \textit{Remark:} In the previous theorem we can replace $\cl W_6$ by the 4-dimensional operator system
$$
\cl W_{2,3}=\{ (a_1, ..., a_5): 3(a_1 + a_2) = 2(a_3+a_4+a_5)  \} \subseteq \ell_5^\infty.
$$
In fact $\cl W_{2,3}^*$ detects WEP for C*-algebras \cite{kavruk2012}.

\smallskip

A C*-algebra $\cl A$ is nuclear if and only if $\cl A \otimes_{\max} \cl B_1 \subseteq  \cl A \otimes_{\max} \cl B_2 $ for every C*-algebras $\cl B_1 \subseteq \cl B_2$, where that later condition is called \textit{quasi-nuclearity} (or \textit{semi-nuclearity}) \cite{Lance}. In fact if $\cl A$ is nuclear we can replace max by min, so the result simply follows from the injectivity of the minimal tensor product. Conversely, by fixing a representation $\cl A \subseteq B(\cl H)$ with $\cl A^{**} \cong \cl A^{''}$, a canonical C*-algebra inclusion $\cl A \otimes_{\max} \cl A' \subseteq \cl A \otimes_{\max} B(\cl H)$, if holds, is solely sufficient to conclude that $\cl A$ is nuclear. In fact the canonical ucp map $\cl A \otimes_{\max} \cl A' \ni a\otimes a' \mapsto aa' \in B(\cl H)$ extends to a ucp map on $\cl A \otimes_{\max} B(\cl H)$, say $\varphi$. Employing Choi's theory of multiplicative domain, $\varphi|_{B(\cl H)}$ must have an image sitting in $\cl A'$. We conclude that $\cl A'$ is injective, therefore $\cl A''\cong \cl A^{**}$ is injective, which is sufficient to conclude that $\cl A$ is nuclear \cite{Effros}. The following result is an operator system analogue of quasi-nuclearity.

\begin{theorem}
The following are equivalent for an operator system $\cl S$:
\begin{enumerate} 

\item $\cl S$ is nuclear;

\item $\cl S$ is (er,max)-nuclear;

\item $\cl S$ is quasi-nuclear, that is, for every operator systems $\cl T_1 \subseteq \cl T_2$ we have
$$
\cl S \otimes_{\max} \cl T_1 \subseteq  \cl S \otimes_{\max} \cl T_2;
$$

\item for any matrix system $\cl R$ we have
$
\cl S \otimes_{\min} \cl R = \cl S \otimes_{\max} \cl R.
$

\end{enumerate}
\end{theorem}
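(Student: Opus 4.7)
The plan is to close the cycle $(1) \Rightarrow (2) \Rightarrow (3) \Rightarrow (4) \Rightarrow (1)$. The first three implications are quick consequences of features of the tensor products already in play, and the whole content of the theorem is concentrated in $(4) \Rightarrow (1)$.

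For $(1) \Rightarrow (2)$, note that $\min \leq \mathrm{er} \leq \max$, so nuclearity forces $\mathrm{er} = \max$. For $(2) \Rightarrow (3)$, given $\cl T_1 \subseteq \cl T_2$, right-injectivity of er produces a complete order embedding $\cl S \otimes_{\mathrm{er}} \cl T_1 \subseteq \cl S \otimes_{\mathrm{er}} \cl T_2$, and (2) identifies each side with the max tensor product. For $(3) \Rightarrow (4)$, let $\cl R \subseteq M_n$ be a matrix system; then (3) gives $\cl S \otimes_{\max} \cl R \subseteq \cl S \otimes_{\max} M_n = \cl S \otimes_{\min} M_n$, where the second equality uses nuclearity of $M_n$. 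Combined with $\cl S \otimes_{\min} \cl R \subseteq \cl S \otimes_{\min} M_n$ (injectivity of min) and $\min \leq \max$, the two structures on $\cl S \otimes \cl R$ must agree; matrix levels follow via $M_k(\cl R) \subseteq M_{kn}$, which is still a matrix system.

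For $(4) \Rightarrow (1)$ I would attempt a finite-dimensional reduction. A min-positive $x \in M_k(\cl S \otimes \cl T)$ has only finitely many nontrivial tensor components and therefore lies in $M_k(\cl S \otimes \cl T_0)$ for a finite-dimensional operator subsystem $\cl T_0 \subseteq \cl T$ (obtained as the span of the unit, the $\cl T$-components of $x$, and their adjoints). Injectivity of min preserves min-positivity at the $\cl T_0$ level, and functoriality of max along $\cl T_0 \hookrightarrow \cl T$ reduces the task to establishing max-positivity of $x$ in $\cl S \otimes_{\max} \cl T_0$. If $\cl T_0$ happens to be a matrix system, this is immediate from (4); for the general case one would push $\cl T_0$ into matrix systems by a net of ucp compressions $\phi_\alpha \colon \cl T_0 \to M_{n_\alpha}$, apply (4) after each compression, and combine the resulting max-positivity data through a limiting argument on the matrix state space of $\cl T_0$.

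The main obstacle is precisely this last step, because not every finite-dimensional operator system is a matrix system: for example, the three-dimensional function system $\mathrm{span}\{1, z, \bar z\} \subseteq C(\bb T)$ admits no completely order isomorphic embedding into any $M_n$, since any unitary in $M_n$ has only finitely many eigenvalues and its associated positive cone strictly overshoots that of the function system. Pushing $x$ forward onto a genuine matrix system and applying (4) therefore yields max-positivity only downstream, and because max is not injective such positivity does not automatically pull back to $\cl S \otimes_{\max} \cl T_0$. The workaround I would pursue is a dual reformulation: positive linear functionals on $\cl S \otimes_{\max} \cl T_0$ correspond via the representation of the maximal tensor product to cp maps $\cl S \to \cl T_0^*$, and these can be extended through cp maps into matrix quotients $M_n/J$, where (4) applies by virtue of the identification $\cl R^* \cong M_n/J$ for matrix systems $\cl R$ from Subsection 1.2, combined with projectivity of max to control the limiting step.
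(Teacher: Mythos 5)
Your easy implications are fine: (1)$\Rightarrow$(2) from $\min\leq{\rm er}\leq\max$, (2)$\Rightarrow$(3) from right injectivity of er (this matches the paper), and your direct (3)$\Rightarrow$(4) via $\cl S\otimes_{\max}\cl R\subseteq\cl S\otimes_{\max}M_n=\cl S\otimes_{\min}M_n$ is correct (the paper instead gets (1)$\Rightarrow$(4) trivially and proves (4)$\Rightarrow$(3)). But your treatment of (4)$\Rightarrow$(1) --- which, as you say, is the entire content of the theorem --- is not a proof: it is a correct finite-dimensional reduction followed by an obstruction you yourself identify and a ``workaround I would pursue'' that is never carried out and would not work as stated. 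The compression scheme fails for the reason you name: $\phi_\alpha$ gives max-positivity only in $\cl S\otimes_{\max}\phi_\alpha(\cl T_0)$, and since max is not injective no limiting argument on matrix states pulls this back to $\cl S\otimes_{\max}\cl T_0$; the failure is at the level of cones, not of convergence. The dual reformulation fares no better: for a general finite-dimensional $\cl T_0$ the dual $\cl T_0^*$ is \emph{not} of the form $M_n/J$ --- the identification $\cl R^*\cong M_n/J$ from Subsection 1.2 holds precisely when $\cl T_0$ is itself a matrix system, the case you had already dispatched --- and hypothesis (4) gives no direct control over $\cl S\otimes_{\max}(M_n/J)$; converting information about matrix systems into information about matrix quotients is exactly the content of Theorem \ref{thm WEP main}, which uses projectivity of max and yields WEP-type conclusions, not nuclearity. (Also tangential but worth noting: your argument that $\sspan\{1,z,\bar z\}\subseteq C(\bb T)$ is not a matrix system is incomplete, since a complete order embedding need not carry $z$ to a unitary.)

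What is missing is the paper's actual mechanism, which routes (4)$\Rightarrow$(3)$\Rightarrow$(1) rather than attacking (4)$\Rightarrow$(1) head-on. For (4)$\Rightarrow$(3), the paper fixes a state $f$ on $\cl S$ and works with the Effros system $[f]\subseteq\cl S^*$: using (4) and the representation of the maximal tensor product, every cp map $\cl R\rightarrow[f]$ with $\cl R\subseteq M_n$ extends to $M_n$ with image still in $[f]$, so $[f]$ has WEP by Theorem \ref{thm WEP main}; then extending the identity of $[f]$ through $I([f])\otimes_{\max}\cl S$ produces a ucp projection $I([f])\rightarrow[f]$, so by Choi--Effros $[f]$ is a C*-algebra, a monotone completeness argument makes it a von Neumann algebra, and WEP plus von Neumann gives injectivity; finally, injectivity of these Effros systems lets one extend any positive functional on $\cl S\otimes_{\max}\cl T_1$, viewed as a cp map $\cl T_1\rightarrow[g]$, to $\cl T_2$, which is (3). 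The implication (3)$\Rightarrow$(1) is itself substantial: from $\cl S\otimes_{\max}\cl T\subseteq\cl S\otimes_{\max}C^*_u(\cl T)$ one gets (c,max)-nuclearity, hence $\cl S$ is a C*-system by Theorem \ref{thm C*-main}; and from $\cl S\otimes_{\max}\cl W_6\subseteq\cl S\otimes_{\max}\ell^\infty_6$ one gets $\cl S\otimes_{\min}\cl W_6=\cl S\otimes_{\max}\cl W_6$, whence the paper's $\cl W_6$-test theorem for C*-systems (itself resting on Arveson's Radon--Nikodym theory, Effros--Lance, and Kirchberg's characterization of nuclearity via injectivity of $\cl S^{**}$) yields nuclearity. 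None of these ingredients --- Effros systems, the WEP-to-injectivity upgrade, the passage through C*-systems and the Namioka--Phelps test --- appears in your proposal, so the key implication remains genuinely open in your write-up.
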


\begin{proof} We first observe that (2) and (3) are equivalent. In fact, given $\cl T_1 \subseteq \cl T_2$, the right injectivity of er guarantees that $\cl S \otimes_{\rm er} \cl T_1 \subseteq  \cl S \otimes_{\rm er} \cl T_2.$ If $\cl S$ is (er,max)-nuclear then we can replace er by max, so (3) follows. Conversely, assuming (3), given an operator system $\cl T$, we have the inclusion $\cl S \otimes_{\max} \cl T \subseteq \cl S \otimes_{\max} I(\cl T)$. As the operator system structure on $\cl S \otimes \cl T$ arising from the inclusion $\cl S \otimes_{\max} I(\cl T)$ is er, we conclude that $\cl S$ is (er,max)-nuclear.

\smallskip

Clearly (1) implies equivalent conditions (2) and (3). Here we will prove that (3) implies (1): as a first step we show that $\cl S$ is a C*-system. In fact, for an operator system $\cl T$, our assumption guarantees that we have an inclusion $\cl S \otimes_{\max} \cl T \subseteq \cl S \otimes_{\max} C^*_u(\cl T) $. As the operator system structure on $\cl S\otimes \cl T$ arisen from the inclusion $\cl S \otimes_{\max} C^*_u(\cl T)$ coincides with the commuting tensor product, we obtain that $\cl S \otimes_{\rm c} \cl T  = \cl S \otimes_{\max} \cl T$. Thus, $\cl S$ is (c,max)-nuclear, equivalently, by Theorem \ref{thm C*-main}, $\cl S$ is a C*-system. Now, our assumption also yields that we have a complete order inclusion $\cl S \otimes_{\max} \cl W_6 \subseteq \cl S \otimes_{\max} \ell_6^\infty $. As $\ell_6^\infty$ is nuclear, the later inclusion yields that $\cl S \otimes_{\min} \cl W_{6} = \cl S \otimes_{\max} \cl W_{6}$. Hence, by the above theorem, $\cl S$ is nuclear.

\smallskip

(1) clearly implies (4). Therefore it suffices to prove that (4) implies (3). Given a matrix system $\cl R \subseteq M_n$, the condition in (4) is equivalent to the canonical complete order embedding $\cl S \otimes_{\max} \cl R \subseteq \cl S \otimes M_n$. Let $f$ be a state on $\cl S$ and let $[f] \subseteq \cl S^*$ be the corresponding Effros system. As a first step we wish to prove that $[f]$ has structure of an injective von Neumann algebra. Let $\varphi: \cl R \rightarrow [f] (\subseteq \cl S^*)$ be a cp map. By the representation of the maximal tensor product we obtain a positive linear functional $\Gamma_{\varphi}: \cl S \otimes_{\max} \cl R \rightarrow \bb C$. By our assumption, $\Gamma_\varphi$ extends to a positive linear functional $\tilde \Gamma_\varphi : \cl S \otimes M_n \rightarrow \bb C $. Employing the representation of the max again, we obtain a cp map $\tilde \varphi : M_n \rightarrow \cl S^*$, which extends $\varphi$. As $\tilde \varphi(1) = \varphi(1)$ and $[f]$ includes all the positive linear functionals dominated by $f$, we conclude that image of $\tilde \varphi$ sits in $[f]$. This shows $[f]$ is approximately injective (actually injective) for the matrix systems, hence, by Theorem \ref{thm WEP main} it has WEP. This means that we must have the canonical complete order embedding $[f] \otimes_{\max} \cl S \subseteq I([f]) \otimes_{\max} \cl S$. Now consider the identity map $i:[f] \rightarrow [f] (\subseteq S^*)$, which corresponds to a positive linear functional $\gamma$ on $[f] \otimes_{\max} \cl S $. By the previous inclusion, $\gamma$ extends to a positive linear functional $\tilde \gamma$ on $I([f]) \otimes_{\max} \cl S$. By using the representation of the max again, we obtain a cp map $\tilde i : I([f]) \rightarrow \cl S^*$, which extends $i$. Clearly the image of $\tilde i$ sits in $[f]$. Therefore we obtain a ucp extension $\tilde i : I([f]) \rightarrow [f]$ of $i$. A fundamental result of Choi and Effros (\cite[Thm. 15.2]{PaulsenBook}) implies that $[f]$ must have a structure of a C*-algebra. Finally, we observe that every increasing bounded net of positive elements in $[f]$ must have a supremum. We leave the details to the reader and conclude that $[f]$ must have a structure of a von Neumann algebra. As WEP and injectivity coincides for von Neumann algebras we obtain the first part of our proof. Now, let $\cl T_1 \subseteq \cl T_2$ be given. Let $f: \cl S \otimes_{\max} \cl T_1 \rightarrow \bb C$ be a positive linear functional. It suffices to show that $f$ extends to a positive linear functional $ \tilde f :\cl S \otimes_{\max} \cl T_2 \rightarrow \bb C$. Let  $\varphi_f : \cl T_1 \rightarrow \cl S^*$ be the cp map corresponding $f$. By rescaling we may suppose that $\varphi_f (1)$ is a state. Let $g$ denotes this state. Clearly the image of $\varphi_f$ sits in $[g] \subseteq \cl S^*$. As $[g]$ is an injective object, $\varphi_f$ extends to a cp map $\tilde \varphi_f : \cl T_2 \rightarrow [g](\subseteq \cl S^*)$. Now let $\tilde f :  \cl S \otimes_{\max} \cl T_2 \rightarrow \bb C$ be the associated positive linear functional. It is elementary to verify that $\tilde f$ extends $f$. This shows that we have an order inclusion $\cl S \otimes_{\max} \cl T_1 \subseteq \cl S \otimes_{\max} \cl T_2$. Finally tensoring with $M_n$ one can show that the inclusion holds completely order isomorphically. This finishes our proof.
\end{proof}

\noindent \textbf{Question:} An operator system $\cl S$ is said to be \textit{C*-quasi-nuclear} if for every $\cl A \subseteq \cl B$, where $\cl A$ is a  C*-subalgebra of the C*-algebra $\cl B$, we have
$
\cl S \otimes_{\max} \cl A \subseteq  \cl S \otimes_{\max} \cl B.
$
Clearly, C*-nuclearity implies C*-quasi-nuclearity. We don't know if the inverse is true. C*-nuclearity is well known to be equivalent to (min,c)-nuclearity \cite{KPTT Nuclearity}. It can be verified that C*-quasi-nuclearity is equivalent to (er,c)-nuclearity.

\smallskip

We end this section with the following trio for an operator system $\cl S$ \cite{Kirchberg94}, \cite{KPTT Nuclearity}, \cite{Han}:

\smallskip

$\cl S$ is (min,max)-nuclear $\Longleftrightarrow$  $\cl S^{**}$ is an injective von Neumann algebra;

\smallskip

$\cl S$ is (el,max)-nuclear $\Longleftrightarrow$  $\cl S^{**}$ is a von Neumann algebra which is injective relative to $\cl S$ 

\hspace{10cm}(i.e$.$ $\cl S$ has WEP);

$\cl S$ is (c,max)-nuclear $\Longleftrightarrow$  $\cl S^{**}$ is a von Neumann algebra.

\smallskip

$ $

{\sc  Department of Mathematics \& Applied Mathematics,

Virginia Commonwealth University

Richmond, VA 23220, U.S.A.}

{\it E-mail address:} askavruk@vcu.edu


\begin{thebibliography}{10}



\bibitem{Arveson} W. B. Arveson, Subalgebras of C*-algebras I, Acta Math. 123 (1969) 141-224.


\bibitem{AB} A. Bhattacharya, Relative weak injectivity of operator system pairs,\textit{ J. Math. Anal. Appl.}, Vol$.$ 420, Issue 1, (2014)


\bibitem{Blecher} D$.$ P$.$ Blecher, B$.$ L$.$ Duncan, Nuclearity-related properties for nonselfadjoint algebras, \textit{J. Op. Theory}
Vol$.$ 65, Issue 1 (2011)




\bibitem{ChoiEffros77}  M. Choi, E. Effros, Injectivity and operator spaces, \textit{J. Functional Analysis} 24 (1977) 156-209.










\bibitem{Effros} E. Effros, Aspects of non-commutative order,
in C*-algebras and applications to physics (Proc. 2nd Japan-USA Seminar, Los Angeles, 1977), eds.




\bibitem{EffrosLance} E. Effros, C. Lance, Tensor products of operator algebras, {\it Adv. in Math.}, 25 (1977), pp. 1-34









\bibitem{FKPT-discrete}
D.~Farenick, A.~S. Kavruk, V.~I. Paulsen and I.~G. Todorov,
\newblock Operator systems from discrete groups,
{\it Comm. Math. Phys.}, Volume 329, Issue 1 (2014), Page 207-238)


\bibitem{farenick--paulsen2011}
D.~Farenick, V.~I. Paulsen,
\newblock Operator system quotients of matrix algebras and their tensor products, {\it Mathematica Scandinavica}, Vol. 111 (2012)





\bibitem{Han} K. H. Han, 
\newblock On maximal tensor products and quotient maps of operator systems, \newblock \textit{J. Math. Anal. Appl.} 384(2):12 (2011)


\bibitem{Haagerup} U. Haagerup, Self-polar forms, conditional expectations and the weak expectation property for C∗-algebras. Unpublished manuscript (1995).












 
\bibitem{kavruk2011}
A.~S. Kavruk,
\newblock Nuclearity related properties in operator systems.
{\it J. Op. Theory}, V 71 Issue 1 (2014) 






\bibitem{Kavruk NP} A.~S. Kavruk, On a non-commutative analogue of a classical result of Namioka and Phelps, \textit{J. Funct. Anal.}
Vol 269, Issue 10, (2015), Pages 3282-3303





\bibitem{kav wri C}
A.~S. Kavruk,
\newblock Relative weak injectivity for C*-algebras.
\newblock {\em preprint (arXiv:1610.08599 )}, 2016.

\bibitem{kavruk2012}
A.~S. Kavruk,
\newblock The weak expectation property and Riesz interpolation,
\newblock {\em preprint (arXiv:1201.5414)}, 2012.





\bibitem{KPTT Nuclearity}
A.~S. Kavruk, V.~I. Paulsen, I.~G. Todorov, and M.~Tomforde,
\newblock Quotients, exactness and nuclearity in the operator system category, {\it Adv. Math.} Volume 235, 1 March 2013, Pages 321-360

\bibitem{KPTT Tensor} A.~S. Kavruk, V.~I. Paulsen, I.~G. Todorov, and M.~Tomforde,
\newblock Tensor Products of Operator Systems,
{\it J. Funct. Anal.}, Vol 261, Issue 2, (2011)




\bibitem{Kirchberg94} E. Kirchberg, On non-semisplit extensions, tensor products and exactness of group C*-algebras, {\it Invent. Math.} 112 (1993) 449-489.



\bibitem{Kirchberg-Presentation} E. Kirchberg,
Some properties of QWEP C∗-algebras, presentation at University of Copenhagen, 2012

\bibitem{KW} E. Kirchberg, S. Wassermann,
C*-Algebras Generated by Operator Systems,
{\it J. Funct. Anal.} V 155, Issue 2, (1998)


\bibitem{Lance} C. Lance, \newblock Tensor products and nuclear C*-algebras, \newblock \textit{Proceedings of Symposia in Pure Mathematics,} Vol.
38 (1982) 379-399.


\bibitem{Lance2} C. Lance, On nuclear C*-algebras, \textit{J. Functional Analysis} 12(1973), 157-176.


\bibitem{JIAN LIANG} J$.$ Liang, Operator-Valued Kirchberg Theory and Its Connection to Tensor Norms and Correspondence, \textit{Doctoral Dissertation}, University of Illinois Urbana-Champaign (2015) 





\bibitem{NP} I$.$ Namioka and R$.$ R$.$ Phelps,
\newblock Tensor products of compact convex sets,
\newblock \textit{Pacific J. Math.} Volume 31, Number 2 (1969), 469-480





\bibitem{PaulsenBook} V. I. Paulsen, {\it Completely bounded maps and operator algebras}, Cambridge Studies in Advanced Mathematics 78, Cambridge University Press, 2002.

\bibitem{Paulsen WEP} V. I. Paulsen, Weak Expectations and the Injective Envelope, \textit{Trans. Amer. Math. Soc.} V. 363 (2011)

\bibitem{Paulsen-Tomforde} V. I. Paulsen and M. Tomforde, Vector spaces with an order unit, Indiana Univ. Math. J., 58 (3) (2007)





\bibitem{pisier_intr}
G. Pisier, {\it Introduction to Operator Space Theory}, {\rm
Cambridge University Press, 2003}





\end{thebibliography}
\end{document}